\documentclass[12pt]{amsart} 
\newcommand{\filename}{qual-quant-subspace-scattering-22-June-2023.tex} 
\usepackage{amsfonts, enumitem,amsmath,amsthm,amssymb,mathrsfs}
\usepackage{tikz-cd,xcolor}
\setlength\oddsidemargin{0in}
\setlength\evensidemargin{0in}
\setlength{\footskip}{.3in}   
\setlength{\textwidth}{16.5cm}
\setlength{\parskip}{5pt}




\renewcommand{\geq}{\geqslant}
\renewcommand{\leq}{\leqslant}
\newcommand{\Osh}{{\mathcal O}}                        

\renewcommand{\H}{\mathrm{H}}                          









\newcommand{\Ish}{\mathcal{I}}



\newcommand{\N}{\operatorname{N}}

\renewcommand{\emptyset}{\varnothing}
\newcommand{\KK}{\mathbf{K}}
\newcommand{\FF}{\mathbf{F}}
\newcommand{\NN}{\mathbb{N}} 
\newcommand{\PP}{\mathbb{P}} 
\newcommand{\QQ}{\mathbb{Q}} 
\newcommand{\RR}{\mathbb{R}} 


\newtheorem{theorem}{Theorem}[section]
\newtheorem{lemma}[theorem]{Lemma}
\newtheorem{corollary}[theorem]{Corollary}
\newtheorem{proposition}[theorem]{Proposition}

\theoremstyle{definition}
\newtheorem{defn}[theorem]{Definition}

\newtheorem{example}[theorem]{Example}

\newtheorem{setting}[theorem]{Setting}

\numberwithin{equation}{section}


\begin{document}

\title[Qualitative aspects of the subspace theorem]{On qualitative aspects of the quantitative subspace theorem}

\author{Nathan Grieve}
\address{
School of Mathematics and Statistics, 4302 Herzberg Laboratories, Carleton University, 1125 Colonel By Drive, Ottawa, ON, K1S 5B6, Canada \\
D\'{e}partement de math\'{e}matiques, Universit\'{e} du Qu\'{e}bec \`a Montr\'{e}al, Local PK-5151, 201 Avenue du Pr\'{e}sident-Kennedy, Montr\'{e}al, QC, H2X 3Y7, Canada  \\
Department of Pure Mathematics, University of Waterloo, 200 University Avenue West, Waterloo, ON, N2L 3G1, Canada
}

\email{nathan.m.grieve@gmail.com}%

\begin{abstract} 
We deduce Diophantine arithmetic inequalities for big linear systems and with respect to finite extensions of number fields.  Our starting point is the Parametric Subspace Theorem, for linear forms, as formulated by Evertse and Ferretti  
\cite{Evertse:Ferretti:2013}.  Among other features, this viewpoint allows for a partitioning of the linear scattering, for the Diophantine Exceptional set, that arises in the Subspace Theorem.  Our perspective builds on our work \cite{Grieve:points:bounded:degree}, combined with earlier work of Evertse and Ferretti, \cite{Evertse:Ferretti:2013}, 
 Evertse and Schlickewei, \cite{Evertse:Schlickewei:2002}, and others.  
As an application, we establish a novel linear scattering type result for the Diophantine exceptional set that arises in the main Diophantine arithmetic inequalities of Ru and Vojta \cite{Ru:Vojta:2016}.  This result expands, refines and complements our earlier works (including \cite{Grieve:2018:autissier} and \cite{Grieve:points:bounded:degree}).  A key tool to our approach is the concept of \emph{linear section} with respect to a linear system.  This was defined in \cite{Grieve:points:bounded:degree}.  Another point, which we develop in this article, is a notion of logarithmic \emph{twisted height functions} for local Weil functions and linear systems.    As an additional observation, which is also of an independent interest, we use the theory of Iitaka fibrations to determine the asymptotic nature of such linear sections.  
\end{abstract}
\thanks{
\emph{Mathematics Subject Classification (2020): 11J87, 14G05, 11G50}. \\
\emph{Key Words: Parametric Subspace Theorem, Weil and height functions, Diophantine approximation, Geometry of Numbers, Linear Series.}  \\
I hold grants RGPIN-2021-03821 and DGECR-2021-00218 from the Natural Sciences and Engineering Research Council of Canada. \\
Date: \today.  File: \filename.  \\
}

\maketitle

\section{Introduction}

Our starting point here is the refinement of the Quantitative Subspace Theorem, which was given by Evertse and Ferretti 
\cite{Evertse:Ferretti:2013}.  It improved on earlier work of Evertse and Schlickewei, \cite{Evertse:Schlickewei:2002}, and Evertse, \cite{Evertse:1996}, and was derived as a consequence of the Absolute Parametric Subspace Theorem (\cite[Theorems 2.1, 2.2 and 2.3]{Evertse:Ferretti:2013}).  

In this article, our purpose is to improve upon qualitative aspects of these works.  We are motivated by the recent progress in our understanding of Diophantine approximation and K-stability for projective varieties.  (See for example \cite{Autissier:2011}, \cite{McKinnon-Roth}, \cite{Ru:Vojta:2016}, \cite{Grieve:Function:Fields}, \cite{Ru:Wang:2016}, \cite{Grieve:2018:autissier}, \cite{Heier:Levin:2017},  \cite{Levin:GCD}, \cite{Grieve:toric:gcd:2019}, \cite{Grieve:Divisorial:Instab:Vojta}, \cite{Grieve:points:bounded:degree},  \cite{Grieve:MVT:2019}, \cite{Grieve:chow:approx}, \cite{Grieve:HN:polygons:Laws:Large:Numbers}, \cite{He:Ru:2022} and the references therein.)  
  
Building on the viewpoint of Schmidt \cite{Schmidt:1993} and Evertse \cite{Evertse:1996}, 
here, we formulate a concept of \emph{density} for rational points with respect to a linear system.  Briefly, a collection of rational points is \emph{dense}, with respect to a given linear system, if it is contained in no proper finite union of the linear system's linear sections.  We refer to Section \ref{linear:systems:rational:maps:v:adic:distances}, see also \cite[Definition 3.1]{Grieve:points:bounded:degree}, for details in regards to the notion of linear sections and density of rational points with respect to a given linear system.  In Section \ref{Weil:height:prelims}, we give a construction of local Weil functions, with respect to a given extension of number fields, via presentations of Cartier divisors.  This builds on \cite[Section 2]{Grieve:2018:autissier}, \cite[Section 3]{Grieve:Divisorial:Instab:Vojta} and \cite[Chapter 2]{Bombieri:Gubler}.  

Our first result is a novel logarithmic form of the Parametric Subspace Theorem  (see Theorem \ref{logarithmic:parametric:subspace:thm}).  It gives inequalities that involve \emph{twisted logarithmic height functions} for big line bundles on projective varieties.    The main context that we consider is Setting \ref{Set:up} below.  
It resembles that of \cite[Theorem 2.10]{Ru:Vojta:2016}, \cite[Proposition 4.2]{Autissier:2011} and \cite[Proposition 2.1]{Grieve:2018:autissier}.

\begin{setting}\label{Set:up} 
Let $\KK$ be a number field, $M_{\KK}$ its set of places and $S \subset M_{\KK}$ a finite set.  Let $\overline{\KK}$ be an algebraic closure of $\KK$ and $\FF / \KK$ a finite extension field $\KK \subseteq \FF \subseteq \overline{\KK}$.

Let $L$ be a big line bundle on a geometrically irreducible projective variety $X$.  Assume that both $X$ and $L$ are defined over $\KK$.  
Let
$$
 V := \H^0(X,L)
$$ 
and set $n := \dim V - 1$.  Unless stated otherwise, we always assume that $n \geq 1$.  

Respectively, denote by $X_{\FF}$ and $L_{\FF}$ the base change of $X$ and $L$ with respect to the field extension $\FF / \KK$.
Put 
$$V_{\FF} := V \otimes_{\KK} \FF = \H^0(X_{\FF}, L_{\FF})\text{.}$$  

If $v \in M_{\KK}$ and if $D$ is a Cartier divisor on $X$ and defined over $\FF$, then $\lambda_{\mathcal{D}}(\cdot,v)$ denotes a local Weil function with respect to the place $v$ and with respect to a fixed choice of presentation $\mathcal{D}$ defined over $\FF$.  For the particular case that $s \in V_{\FF}$ and $D = \operatorname{div}(s)$ we often write $\lambda_s(\cdot,v)$ in place of $\lambda_{\mathcal{D}}(\cdot,v)$. (We refer to Section \ref{Weil:height:prelims} for more details.)
\end{setting}

We deduce Theorems  \ref{logarithmic:FW:subspace:thm} and \ref{logarithmic:linear:scattering:subspace:thm} from the following  
novel logarithmic formulation of the Parametric Subspace Theorem for big linear systems and with respect to a finite extension of number fields.  This is the content of Theorem \ref{logarithmic:parametric:subspace:thm}.  We prove it in Section \ref{parametric:proof}.  It is a consequence of a more robust relative formulation of \cite[p. 515]{Evertse:Ferretti:2013}.  (See Theorem \ref{Parametric:Subspace:Thm}.)

\begin{theorem}[Parametric Subspace Theorem for big linear systems]\label{logarithmic:parametric:subspace:thm}
Consider the situation of Setting \ref{Set:up} and for each $v \in S$ choose a collection of linearly independent global sections 
$$s_{v0},\dots,s_{vn} \in V_{\FF}\text{.}$$  
Suppose that $\epsilon > 0$ is a fixed positive real number.  
Fix a collection of real numbers $c_{vi} \in \RR$ 
which has the property that
$$
\sum_{i=0}^n c_{vi} = 0 
\text{ 
for all 
$v \in S\text{.}$
}
$$
Then there exist a real number 
$Q_0 > 1$ 
and a finite collection of proper linear sections 
$$
\Lambda_1,\dots,\Lambda_{t} \subsetneq X \text{, }
$$
with respect to the linear series $|V|$, 
which are defined over $\KK$ and which have the property that for all 
$Q \geq Q_0$
there is a linear section
$$
\Lambda_{j_Q} \in \left\{ \Lambda_1,\dots,\Lambda_{t} \right\}
$$
which contains all $\KK$-rational points
$$
x \in \left( X \setminus \left( \operatorname{Bs}(|V|) \bigcup  \bigcup_{\substack{v \in S \\ i = 0,\dots,n} }\operatorname{Supp}(s_{vi}) \right) \right)(\KK)
$$
that satisfy the inequalities that
$$
\sum_{v \in S} \left( \lambda_{s_{vi}}(x,v) + c_{vi} \cdot \log(Q) \right) \geq h_L(x) + \epsilon \cdot \operatorname{log}(Q) + \mathrm{O}(1) 
$$  
for all $i = 0,\dots, n$.

In particular, the collection of such points is not dense with respect to $|V|$.
\end{theorem}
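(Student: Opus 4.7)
The plan is to reduce the statement to the parametric subspace theorem for linear forms on projective space (the relative form Theorem~\ref{Parametric:Subspace:Thm}) by transporting everything along the rational map determined by $|V|$. First I would fix a $\KK$-basis $t_0,\ldots,t_n$ of $V$ and use it to present the rational map $\phi\colon X \dashrightarrow \PP^n_{\KK}$; its indeterminacy locus coincides with $\operatorname{Bs}(|V|)$, so on the complement $\phi$ is a morphism. Each section $s_{vi} \in V_{\FF}$ then corresponds, via its coordinates in the basis $t_0,\ldots,t_n$, to a linear form $\ell_{vi}$ on $\PP^n_{\FF}$, and the hypothesized linear independence of $s_{v0},\ldots,s_{vn}$ over $\FF$ transfers verbatim to $\ell_{v0},\ldots,\ell_{vn}$.

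Next I would invoke the standard functoriality of local Weil functions and logarithmic heights. Writing $U := X \setminus \bigl(\operatorname{Bs}(|V|) \cup \bigcup_{v,i}\operatorname{Supp}(s_{vi})\bigr)$, one has
\[
\lambda_{s_{vi}}(x,v) = \lambda_{\ell_{vi}}(\phi(x),v) + \mathrm{O}(1)
\qquad \text{and} \qquad
h_L(x) = h_{\Osh(1)}(\phi(x)) + \mathrm{O}(1)
\]
for every $v \in S$, every $i$, and every $x \in U(\KK)$, with constants independent of $x$. The hypothesized inequalities therefore translate, up to a single uniform error, into the corresponding $\PP^n$-statement for $\phi(x)$ and the linear forms $\ell_{vi}$. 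Since the hypothesis $\sum_i c_{vi} = 0$ is precisely the product-formula normalization built into the parametric subspace theorem, this places the problem inside the scope of Theorem~\ref{Parametric:Subspace:Thm}.

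I would then apply Theorem~\ref{Parametric:Subspace:Thm} to produce a threshold $Q_0 > 1$ and a finite collection of proper $\KK$-rational linear subspaces $W_1,\ldots,W_t \subsetneq \PP^n$ with the property that, for every $Q \geq Q_0$, some $W_{j_Q}$ contains every $\KK$-rational image point $\phi(x)$ satisfying the translated inequalities. Setting $\Lambda_j := \overline{\phi^{-1}(W_j)}$ then produces proper linear sections of $X$ with respect to $|V|$ in the sense of \cite{Grieve:points:bounded:degree}, each defined over $\KK$ because $\phi$ and $W_j$ are. By construction, every $\KK$-rational $x \in U(\KK)$ satisfying the original inequalities (with $Q \geq Q_0$) lies in some $\Lambda_{j_Q}$, from which the non-density with respect to $|V|$ is immediate.

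The main obstacle will be ensuring that the $\mathrm{O}(1)$ terms picked up during the passage to $\PP^n$ are uniform in $x$ and in $Q$, so that they can be absorbed into the slack afforded by $\epsilon \log Q$ and merely enlarge $Q_0$. This rests on the fact that these errors depend only on the fixed data (the basis $t_0,\ldots,t_n$, the presentations $\mathcal{D}$ of Section~\ref{Weil:height:prelims}, and the chosen local Weil functions), and not on the approximating points. A related technical point is handling the base-locus correction inherent in the comparison $h_L(x) - h_{\Osh(1)}(\phi(x))$ when $L$ is only assumed big rather than very ample, which one treats by working over $U$ and invoking the functorial properties of heights along the morphism $\phi|_U$.
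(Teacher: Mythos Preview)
Your proposal is correct and follows essentially the same strategy as the paper: reduce to the Parametric Subspace Theorem on $\PP^n$ (Theorem~\ref{Parametric:Subspace:Thm}) by transporting local Weil functions and heights along the rational map $\phi_{|V|}$, then pull back the resulting linear subspaces to obtain the linear sections $\Lambda_j$. The paper organizes this slightly differently---first isolating the logarithmic reformulation on $\PP^n$ as a separate Proposition~\ref{log:Parametric:Subspace:Thm} and using the blow-up of $\operatorname{Bs}(|V|)$ from Section~\ref{linear:systems:rational:maps:v:adic:distances} rather than restricting to the open set $U$---but the substance is the same.
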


As an application of Theorem \ref{logarithmic:parametric:subspace:thm}, we deduce a complementary form of  a  celebrated theorem of Faltings and W\"{u}stholz \cite[Theorem 8.1]{Faltings:Wustholz}.

\begin{theorem}[Faltings and W\"{u}stholz inequalities for big linear systems]\label{logarithmic:FW:subspace:thm}
Consider the situation of Setting \ref{Set:up} and for each $v \in S$
choose a collection of linearly independent global sections 
$$s_{v0},\dots,s_{vn} \in V_{\FF}\text{.}$$  
Finally, fix a collection of real numbers $d_{vi}$, for all $v \in S$ and all $i = 0,\dots,n$, which have the property that
$$
\sum_{v \in S} \sum_{i=0}^n d_{vi} > n+1 \text{.}
$$

Then the set of solutions
$$
x \in \left( X \setminus \left( \operatorname{Bs}(|V|) \bigcup \bigcup_{\substack{v \in S \\ i = 0,\dots,n} }\operatorname{Supp}(s_{vi}) \right) \right)(\KK)
$$
of the system of inequalities
$$
\lambda_{s_{vi}}(x,v) - d_{vi} \cdot h_L(x) + \mathrm{O}_v(1) \geq 0 
$$
for all $i = 0,\dots,n$ and all $v \in S$ is not dense with respect to $|V|$.
\end{theorem}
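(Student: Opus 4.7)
The plan is to deduce Theorem \ref{logarithmic:FW:subspace:thm} from Theorem \ref{logarithmic:parametric:subspace:thm} by the standard device of specializing the parameter $Q$ of the parametric inequality to $\exp(h_L(x))$, after introducing twist parameters $c_{vi}$ whose $i$-sum vanishes for each $v \in S$. This converts the absolute Faltings--W\"{u}stholz inequalities into the parametric inequalities to which Theorem \ref{logarithmic:parametric:subspace:thm} applies.

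Concretely, set
$$
D_i := \sum_{v \in S} d_{vi}, \quad \bar{d}_v := \frac{1}{n+1}\sum_{i=0}^n d_{vi}, \quad \bar{D} := \sum_{v \in S} \bar{d}_v = \frac{1}{n+1}\sum_{v \in S}\sum_{i=0}^n d_{vi}.
$$
The hypothesis $\sum_{v,i} d_{vi} > n+1$ yields $\bar{D} > 1$. Fix a real number $\epsilon$ with $0 < \epsilon < \bar{D} - 1$, and set $c_{vi} := \bar{d}_v - d_{vi}$; then $\sum_{i=0}^n c_{vi} = 0$ for each $v \in S$. Apply Theorem \ref{logarithmic:parametric:subspace:thm} with these data to obtain a real number $Q_0 > 1$ and proper linear sections $\Lambda_1,\ldots,\Lambda_t \subsetneq X$ of $|V|$, defined over $\KK$.

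For any solution $x$ of the Faltings--W\"{u}stholz inequalities with $h_L(x)$ sufficiently large, set $\log Q := h_L(x) \geq \log Q_0$. Summing the hypothesis $\lambda_{s_{vi}}(x,v) \geq d_{vi}\, h_L(x) + \mathrm{O}_v(1)$ over $v \in S$ for each fixed $i$ gives
$$
\sum_{v \in S} \bigl(\lambda_{s_{vi}}(x,v) + c_{vi} \log Q\bigr) \geq D_i\, h_L(x) + (\bar{D} - D_i)\log Q + \mathrm{O}(1) = \bar{D}\, h_L(x) + \mathrm{O}(1),
$$
which, since $\bar{D} > 1 + \epsilon$, exceeds $h_L(x) + \epsilon \log Q + \mathrm{O}(1)$ once $h_L(x)$ is large enough. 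The conclusion of Theorem \ref{logarithmic:parametric:subspace:thm} then places $x$ in one of $\Lambda_1,\ldots,\Lambda_t$.

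The remaining task is to cover by finitely many proper linear sections of $|V|$ those solutions with $h_L(x)$ below this threshold. A Kodaira-type decomposition of a suitable multiple of $L$, combined with Northcott's theorem off the exceptional divisor, dispatches this routinely: the exceptional divisor is covered by the vanishing locus of a single product section in $V$ (using bigness and $\dim V \geq 2$), while off the exceptional divisor the set of bounded-height solutions is finite by Northcott, and each such point lies on a proper linear section of $|V|$ because $x \notin \operatorname{Bs}(|V|)$ forces the evaluation map $V \to L(x)$ to have a nontrivial kernel. The main technical subtlety, as opposed to conceptual obstacle, is ensuring that all implicit $\mathrm{O}(1)$ constants arising from the Weil functions, from summing over $S$, and from the conclusion of Theorem \ref{logarithmic:parametric:subspace:thm} remain independent of $x$ and $Q$; this is taken care of by the twisted-height formalism developed in Section \ref{Weil:height:prelims}.
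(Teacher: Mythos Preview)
Your reduction to Theorem \ref{logarithmic:parametric:subspace:thm} is exactly the paper's own: the same centering $c_{vi} = \bar d_v - d_{vi}$ with $\bar d_v = \tfrac{1}{n+1}\sum_j d_{vj}$, the same specialization $Q = \exp(h_L(x))$, and the same verification that the parametric inequalities hold with margin $\epsilon < \bar D - 1$. The paper in fact takes $\epsilon = \bar D - 1$ on the nose, but this is immaterial.

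The one place your write-up diverges is the bounded-height endgame, and here your argument has a gap. You invoke a Kodaira decomposition $mL \sim A + E$ and then assert that ``the exceptional divisor is covered by the vanishing locus of a single product section in $V$''. This is not justified: the effective divisor $E$ arising from Kodaira's lemma has no a priori relation to the zero loci of sections of $V = \H^0(X,L)$, and there is no reason $\operatorname{Supp}(E)$ should sit inside any finite union of such loci. The paper handles this step more directly by citing Northcott for big line bundles, and the clean way to unpack that is to bypass Kodaira entirely: for $x \notin \operatorname{Bs}(|V|)$ one has $h_L(x) = h_{\Osh_{\PP^n}(1)}(\phi(x)) + \mathrm{O}(1)$ by \eqref{height:function:eqn:resolve:base:locus}, so a bound on $h_L(x)$ forces $\phi(x)$ into a finite subset of $\PP^n(\KK)$ by the classical Northcott theorem; each resulting fibre $\phi^{-1}(p)$ is contained in the linear section cut by any hyperplane through $p$. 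This covers the bounded-height solutions by finitely many proper linear sections without ever leaving the linear system $|V|$.
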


As an application of Theorem \ref{logarithmic:FW:subspace:thm}, we obtain a \emph{qualitative linear scattering} type result which improves upon our understanding of the Subspace Theorem.  It provides a qualitative perspective to the quantitative work of Evertse \cite{Evertse:1996} and Evertse and Schlickewei \cite{Evertse:Schlickewei:2002}.
We refer to  \cite[p. 240]{Evertse:1996} for the concept of \emph{linear scattering} in the classical case of the Subspace Theorem.

The proof of Theorem \ref{logarithmic:linear:scattering:subspace:thm}, see Section \ref{Log:Subspace:Proof}, expands on the approach of \cite[Section 21]{Evertse:Schlickewei:2002}.  It follows the suggestion given in \cite[p.~ 514]{Evertse:Ferretti:2013}.  A key role is played by Lemma \ref{Evertse:scattering:lemma4} which is a logarithmic form of \cite[Lemma 4]{Evertse:1984}.

\begin{theorem}[Linear scattering and the Subspace Theorem for big linear systems]\label{logarithmic:linear:scattering:subspace:thm}
Consider the situation of Setting \ref{Set:up} and 
for each $v \in S$ fix a collection of linearly independent global sections 
$$s_{v0},\dots,s_{vn} \in V_{\FF}\text{.}$$  
Fix a positive and sufficiently small real number  
$\epsilon > 0$ and consider the set of solutions
$$
x \in \left(X \setminus \left( \operatorname{Bs}(|V|) \bigcup   \bigcup_{\substack{v \in S \\ i = 0,\dots,n}} \operatorname{Supp}(s_{vi}) \right) \right)(\KK) 
$$
with sufficiently large height 
$$h_L(x) \gg 0$$ 
to the inequality
\begin{equation}\label{Schmidt:height:inequalities}
\sum_{v \in S} \sum_{i=0}^n \lambda_{s_{vi}}(x,v) \geq (n+1+\epsilon)h_L(x) + \mathrm{O}(1) \text{.}
\end{equation}
Then this solution set is not dense with respect to $|V|$.  

In more specific terms, the solution set admits a decomposition into finitely many subsets such that for each subset there exists a collection of real numbers $e_{vi}$, for each $v \in S$ and all $i = 0,\dots,n$, such that
$$
\sum_{v \in S} \sum_{i=0}^n e_{vi} > n+1
$$
and such that all solutions in this subset satisfy the inequalities that
$$
\lambda_{s_{vi}}(x,v) - e_{vi} \cdot h_L(x) + \mathrm{O}_v(1) \geq 0 
$$
for all $v \in S$ and all $i = 0,\dots,n$.

In particular, the collection of solutions to \eqref{Schmidt:height:inequalities} that are in the subset corresponding to the weights $e_{vi}$ is contained in a finite union of proper linear sections of $X$ with respect to the linear series $|V|$.
\end{theorem}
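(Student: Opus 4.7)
The plan is to deduce Theorem \ref{logarithmic:linear:scattering:subspace:thm} from Theorem \ref{logarithmic:FW:subspace:thm} by partitioning the solution set into finitely many classes, on each of which a Faltings--W\"{u}stholz type lower bound with prescribed weights $e_{vi}$ holds uniformly.  This mirrors the structure of \cite[Section 21]{Evertse:Schlickewei:2002}, transposed to the present relative linear-series setting, with the discretization step played by Lemma \ref{Evertse:scattering:lemma4}.

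First I would normalize the approximation data.  For each solution $x$ with $h_L(x) \gg 0$, set
$$
\mu_{vi}(x) := \lambda_{s_{vi}}(x,v)/h_L(x)\text{.}
$$
Combining the identity $\sum_{w \in M_{\KK}} \lambda_{s_{vi}}(x,w) = h_L(x) + \mathrm{O}(1)$ with the uniform lower bounds $\lambda_{s_{vi}}(x,w) \geq -\mathrm{O}_w(1)$ valid on the complement of $\operatorname{Supp}(s_{vi})$, one checks that the profile $(\mu_{vi}(x))_{v \in S,\, 0 \leq i \leq n}$ lies in a fixed bounded box $B_0 \subset \RR^{|S|(n+1)}$, of the form $[-\eta, 1+\eta]^{|S|(n+1)}$ for any small $\eta > 0$, once $h_L(x)$ is sufficiently large.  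The main hypothesis \eqref{Schmidt:height:inequalities} rewrites as
$$
\sum_{v \in S} \sum_{i=0}^n \mu_{vi}(x) \geq n+1+\epsilon + \mathrm{o}(1)\text{.}
$$

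Next I would apply the discretization supplied by Lemma \ref{Evertse:scattering:lemma4}, our logarithmic analogue of \cite[Lemma 4]{Evertse:1984}.  Fix $\delta > 0$ with $|S|(n+1)\delta < \epsilon/2$ and cover $B_0$ by finitely many closed cubes of side length $\delta$.  For each such cube $B$, set $(e_{vi}^B)_{v,i}$ equal to its lower corner.  For a solution $x$ whose profile falls in $B$, the coordinate inequalities $\mu_{vi}(x) \geq e_{vi}^B$ multiply back to
$$
\lambda_{s_{vi}}(x,v) \geq e_{vi}^B \cdot h_L(x) + \mathrm{O}_v(1)
$$
with constants uniform across the class, and summing together with the main hypothesis gives
$$
\sum_{v \in S} \sum_{i=0}^n e_{vi}^B \geq n+1+\epsilon - |S|(n+1)\delta > n+1\text{.}
$$
Theorem \ref{logarithmic:FW:subspace:thm}, applied to each of these finitely many weight tuples $(e_{vi}^B)_{v,i}$, then places the corresponding class inside a finite union of proper linear sections of $X$ with respect to $|V|$, yielding the asserted decomposition and non-density conclusion.

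The main obstacle, and the content which makes Lemma \ref{Evertse:scattering:lemma4} a genuine logarithmic refinement of \cite[Lemma 4]{Evertse:1984}, is the uniform control of the error terms: both the two-sided bound that confines the profiles to the single box $B_0$, and the passage from $\mu_{vi}(x) \geq e_{vi}^B$ to $\lambda_{s_{vi}}(x,v) \geq e_{vi}^B \cdot h_L(x) + \mathrm{O}_v(1)$, require constants that are independent of $x$ within a given class.  These estimates are the place where the Weil-function formalism and presentation-theoretic setup of Section \ref{Weil:height:prelims} are used, and they are what allow Theorem \ref{logarithmic:FW:subspace:thm} to be invoked cleanly class by class.
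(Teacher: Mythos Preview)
Your argument is correct in substance but takes a genuinely different route from the paper.  The paper splits the solution set into \textbf{Type I} solutions (those for which some single index $i$ already satisfies $\sum_{v\in S}\lambda_{s_{vi}}(x,v)\geq (n+1+\epsilon)h_L(x)+\mathrm{O}(1)$) and \textbf{Type II} solutions (the rest), and treats each case separately by applying Lemma \ref{Evertse:scattering:lemma4} --- first with index set $S$, then with index set $S\times\{0,\dots,n\}$.  In the Type I case the paper even modifies the family of sections, replacing all but one $s_{vi}$ by coordinate sections, before invoking Theorem \ref{logarithmic:FW:subspace:thm}.  Your approach bypasses this dichotomy entirely: having observed that the normalized profiles $(\mu_{vi}(x))$ lie in a fixed bounded box (which you justify correctly via the two-sided Weil-function bounds $0\leq\lambda_{s_{vi}}(x,v)\leq h_L(x)+\mathrm{O}(1)$), a single $\delta$-mesh covering of that box already produces finitely many weight tuples $(e^B_{vi})$ with $\sum e^B_{vi}>n+1$, and Theorem \ref{logarithmic:FW:subspace:thm} applies directly with the \emph{original} sections $s_{vi}$.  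This is more elementary and matches the statement of the theorem more literally than the paper's own proof.

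One point of confusion: you invoke Lemma \ref{Evertse:scattering:lemma4} by name, but what you actually carry out is a straight compactness/$\delta$-net argument, not the scaling-type covering that Lemma \ref{Evertse:scattering:lemma4} provides (a finite subset $\mathcal{S}(c)\subset\mathcal{R}(c)$ dominating every nonnegative tuple after normalization by its $\ell^1$-norm).  Your cube covering does not use, and does not need, that lemma; the paper, by contrast, genuinely relies on it because the Type I/II analysis works with unnormalized profiles whose $\ell^1$-norm is not fixed.  So either drop the reference to Lemma \ref{Evertse:scattering:lemma4} or note explicitly that your discretization is a simpler substitute made possible by the a priori boundedness of the profiles.
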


Theorem \ref{logarithmic:linear:scattering:subspace:thm} implies a more general result, which is in the spirit of Vojta's formulation of the Subspace Theorem for hyperplanes in general position.   (See for instance \cite[Theorem 7.2.9]{Bombieri:Gubler}.) We formulate this form of the Subspace Theorem, for big linear systems and global sections in \emph{linearly general position}, as Corollary \ref{general:position:logarithmic:linear:scattering:subspace:thm} below.

\begin{corollary}[Subspace Theorem for big linear systems and global sections in linearly general position]\label{general:position:logarithmic:linear:scattering:subspace:thm}
Consider the situation of Setting \ref{Set:up} and for all $v \in S$ let 
$$s_{v0},\dots,s_{vn_v} \in V_{\FF}$$ 
be a collection of global sections of $L_{\FF}$, with $n_v \geq n$, which have the property that all subsets of cardinality not exceeding $n+1$ are $\FF$-linearly independent.  
Fix a positive and sufficiently small real number $\epsilon >0$ and consider the set of solutions
$$
x \in \left(X \setminus \left( \operatorname{Bs}(|V|) \bigcup   \bigcup_{\substack{v \in S \\ i = 0,\dots,n}} \operatorname{Supp}(s_{vi}) \right) \right)(\KK) 
$$
with sufficiently large height 
$$h_L(x) \gg 0$$ 
to the inequality
\begin{equation}\label{Schmidt:height:inequalities:gen:pos}
\sum_{v \in S} \sum_{i=0}^{n_v} \lambda_{s_{vi}}(x,v) \geq (n+1+\epsilon)h_L(x) + \mathrm{O}(1) \text{.}
\end{equation}
Then this solution set is not dense with respect to $|V|$.  

In more specific terms, the solution set is a finite union of proper linear sections with respect to $|L|$.  Furthermore, it admits a decomposition into finitely many subsets such that for each subset there exist real numbers $e_{vi}$, for each $v \in S$ and all $i = 0,\dots,n$, such that
$$
\sum_{v \in S} \sum_{i=0}^n e_{vi} > n+1 
$$
and such that for some linearly independent collection of sections 
$$\{s_{vj_0},\dots,s_{vj_n}\} \subseteq \{s_{v0},\dots,s_{vn_v}\} $$ 
all except for perhaps finitely many solutions in this subset satisfy the inequalities that
$$
\lambda_{s_{vj_i}}(x,v) - e_{vi} \cdot h_L(x) + \mathrm{O}_v(1) \geq 0 
$$
for all $v \in S$ and all $i = 0,\dots,n$.

In particular, the collection of solutions to \eqref{Schmidt:height:inequalities:gen:pos} that are in the subset corresponding to the weights $e_{vi}$ are contained in a finite union of proper linear sections of $X$ with respect to the linear series $|V|$.
\end{corollary}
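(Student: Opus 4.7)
The strategy is a general-position selection argument that reduces the corollary to Theorem \ref{logarithmic:linear:scattering:subspace:thm}.

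The key preliminary is a Vojta-type bound: with $s_{v0},\dots,s_{vn_v}$ in linearly general position as in the hypothesis, for each $v \in M_{\KK}$ there exists a constant $C_v$ such that for every $x \in (X \setminus \operatorname{Bs}(|V|))(\KK)$ and every $(n+1)$-element subset $\{j_0,\dots,j_n\} \subseteq \{0,\dots,n_v\}$,
$$
\min_{k=0,\dots,n} \lambda_{s_{v j_k}}(x,v) \leq C_v.
$$
Indeed, by the general-position hypothesis any such $n+1$ sections are $\FF$-linearly independent and hence span $V_{\FF}$; pulling back via the rational map $\phi_V \colon X \dashrightarrow \PP(V)$, the corresponding hyperplanes $H_{j_0},\dots,H_{j_n} \subset \PP(V)$ have empty common zero locus in $\PP(V)$. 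The claimed bound then follows from compactness of $\PP(V)(\overline{\KK_v})$ together with the standard relation $\lambda_{s_{v j_k}}(\cdot,v) = \lambda_{H_{j_k}}(\phi_V(\cdot),v) + \mathrm{O}_v(1)$ on $X \setminus \operatorname{Bs}(|V|)$.

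With this in place, the main argument proceeds by a greedy selection. Given a solution $x$, for each $v \in S$ let $\sigma_v$ be a permutation of $\{0,\dots,n_v\}$ ordering the Weil values
$$
\lambda_{s_{v \sigma_v(0)}}(x,v) \geq \cdots \geq \lambda_{s_{v \sigma_v(n_v)}}(x,v),
$$
and put $I_v = I_v(x) := \{\sigma_v(0),\dots,\sigma_v(n)\}$, the indices giving the top $n+1$ values. Applying the preliminary bound to $I_v$ yields $\lambda_{s_{v\sigma_v(n)}}(x,v) \leq C_v$, and therefore $\lambda_{s_{vj}}(x,v) \leq C_v$ for every $j \notin I_v$. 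Summing the discarded terms over $v \in S$ and inserting into \eqref{Schmidt:height:inequalities:gen:pos} gives
$$
\sum_{v \in S} \sum_{i \in I_v} \lambda_{s_{vi}}(x,v) \geq \sum_{v \in S} \sum_{j=0}^{n_v} \lambda_{s_{vj}}(x,v) - \mathrm{O}(1) \geq (n+1+\epsilon) h_L(x) + \mathrm{O}(1).
$$

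To finish, I partition the solution set according to the finitely many possible values of the tuple $(I_v)_{v \in S}$. On each piece the $n+1$ chosen sections $\{s_{vi}\}_{i \in I_v}$ are $\FF$-linearly independent by the general-position hypothesis, and the displayed inequality above is precisely the hypothesis of Theorem \ref{logarithmic:linear:scattering:subspace:thm} for this subcollection. Applying that theorem piece-by-piece produces the further decomposition into finitely many subsets with weights $e_{vi}$ satisfying $\sum_{v,i} e_{vi} > n+1$ and the inequalities $\lambda_{s_{vj_i}}(x,v) - e_{vi} h_L(x) + \mathrm{O}_v(1) \geq 0$; the "finite union of proper linear sections" assertion then follows by taking the union over the finitely many pieces. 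I expect the main obstacle to be the Vojta-type preliminary bound, and specifically the bookkeeping needed to transfer the compactness estimate on $\PP(V)$ back to Weil functions on $X$ with the correct $\mathrm{O}_v(1)$ control relative to the fixed presentations of Setting \ref{Set:up}; once this is cleared, the remainder is a formal reduction to Theorem \ref{logarithmic:linear:scattering:subspace:thm}.
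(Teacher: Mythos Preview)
Your proposal is correct and matches the paper's approach. The paper's own proof is a terse one-paragraph sketch that cites \cite[Proof of Theorem 7.2.9]{Bombieri:Gubler} and simply asserts that the solutions can be partitioned into finitely many classes on each of which some $n+1$ of the sections already satisfy $\sum_{v}\sum_{i=0}^{n}\lambda_{m_{vi}}(x,v)\geq \sum_{v}\sum_{i=0}^{n_v}\lambda_{s_{vi}}(x,v)-\log(C)$, after which Theorem~\ref{logarithmic:linear:scattering:subspace:thm} is applied piece by piece (with a Northcott step to absorb the altered constant); your greedy selection by ordering Weil values and your compactness bound $\min_k \lambda_{s_{vj_k}}(x,v)\leq C_v$ are exactly the mechanism behind that assertion, spelled out in more detail than the paper gives.
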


As an application of Theorem \ref{logarithmic:linear:scattering:subspace:thm}, in the form of Corollary \ref{general:position:logarithmic:linear:scattering:subspace:thm}, we deduce qualitative scattering information that arises in the conclusion of the Ru-Vojta Arithmetic General Theorem (\cite[p. 964]{Ru:Vojta:2016}).  This is the content of Theorem \ref{Arithmetic:General:Thm:linear:scattering}.  In its conclusion, the description of the Diophantine exceptional set refines, for the case of $\KK$-rational points, and builds on our main result from \cite{Grieve:points:bounded:degree}.  The statement of Theorem \ref{base:loci} requires the notion of stable base locus.  We refer to Section \ref{base:loci} for further details.

\begin{theorem}[Arithmetic General Theorem with linear scattering]\label{Arithmetic:General:Thm:linear:scattering}
Working over a base number field $\KK$, fix a finite set of places $S \subset M_{\KK}$.  Let $L$ be a big line bundle on a geometrically irreducible projective variety $X$.  Assume that $X$ and $L$ are both defined over $\KK$.  Let $D_1,\dots,D_q$ be a collection of nonzero properly intersecting effective Cartier divisors on $X$ and defined over $\FF$.  Then there are optimal constants $\eta(L,D_i)$, for $i=1,\dots,q$, which are such that if $\epsilon > 0$  is a sufficiently small real number, then the inequality
$$
\sum_{i=1}^q \eta(L,D_i)m_S(x,D_i) \leq (1+\epsilon)h_L(x)
$$
holds true for all $x \in X(\KK)$ outside of a Zariski closed subset $Z \subsetneq X$.  Here, $m_S(\cdot,D_i)$, for $i = 1,\dots,q$, is the proximity function of $D_i$ with respect to $S$. 

Moreover, the \emph{Diophantine exceptional} set $Z$ may be described as
$$
Z = \operatorname{Bs}(L) \bigcup \left( \bigcup_{i=1}^q \operatorname{Supp}(D_i) \right) \bigcup \left( \Lambda_1 \bigcup \hdots \bigcup \Lambda_{\ell} \right) 
$$
for $\operatorname{Bs}(L)$ the stable base locus of $L$ and $\Lambda_1,\dots,\Lambda_{\ell}$ linear sections of the complete linear system $|L^{\otimes m}|$ for some suitably large positive integer $m$.  

Finally, all but perhaps finitely many points
in the union of the linear sections $\Lambda_1\bigcup \hdots \bigcup \Lambda_{\ell}$, admit a \emph{linear scattering type decomposition} into finitely many subsets such that the following is true: for each subset there exist real numbers $e_{vi}$, for all $v \in S$ and all $i = 0,\dots,n_m$, such that
$$
\sum_{v \in S} \sum_{i=0}^{n_m} e_{vi} > n_m + 1
$$ 
and for each  $v \in S$, linearly independent sections 
$$s_{vj_0},\dots, s_{vj_{n_m}} \in \H^0(X_{\FF},L^{\otimes m}_{\FF})$$ 
which are 
such that all solutions in this subset satisfy the inequalities that
$$
\lambda_{s_{vj_i}}(x,v) - e_{vi} h_{L^{\otimes m}}(x) + \mathrm{O}_v(1) \geq 0 
$$
for all $v \in S$ and all $i = 0,\dots,n_m$.
\end{theorem}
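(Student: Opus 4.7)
The plan is to reduce both the arithmetic inequality and its linear scattering refinement to a single application of Corollary \ref{general:position:logarithmic:linear:scattering:subspace:thm} for the complete linear system $|L^{\otimes m}|$, with $m \gg 0$. The constants $\eta(L, D_i)$ are taken to be the Ru--Vojta beta-type invariants, governing the filtration of $\H^0(X, L^{\otimes m})$ by order of vanishing along $D_i$. As a preliminary step I would fix $m$ large enough that the normalized filtration sums approximate $\eta(L, D_i)$ uniformly in $i$ to within an $\epsilon$-error, and that the proper intersection hypothesis on $D_1,\dots,D_q$ can be leveraged to produce filtration-adapted bases whose union, for each $v \in S$, lies in linearly general position inside $V_{m,\FF} := \H^0(X_{\FF}, L^{\otimes m}_{\FF})$.

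For each $v \in S$ and each $i \in \{1,\dots,q\}$, fix a basis $s^{(i)}_{v,0}, \dots, s^{(i)}_{v,n_m}$ of $V_{m,\FF}$ compatible with the $D_i$-vanishing filtration. Partition
$$
X(\KK) \setminus \left( \operatorname{Bs}(L) \cup \bigcup_{i=1}^q \operatorname{Supp}(D_i) \right)
$$
into finitely many classes, one for each assignment $v \mapsto i(v) \in \{1,\dots,q\}$ of maximizers of the quantity $\eta(L, D_i) \cdot \lambda_{D_i}(x,v)$. On each such class, the standard Ru-Vojta filtration argument, combined with the identity $\lambda_{s^{(i)}_{v,j}}(x,v) = \ord_{D_i}(s^{(i)}_{v,j}) \cdot \lambda_{D_i}(x,v) + \mathrm{O}_v(h_L(x)/m)$ (valid outside $\operatorname{Bs}(L) \cup \operatorname{Supp}(D_i)$) together with the definition of $\eta(L, D_i)$, produces a Schmidt-type lower bound: failure of the claimed inequality $\sum_i \eta(L,D_i) m_S(x,D_i) \leq (1+\epsilon) h_L(x)$ on such a class would force
$$
\sum_{v \in S} \sum_{j=0}^{n_m} \lambda_{s^{(i(v))}_{v,j}}(x,v) \;\geq\; (n_m + 1 + \epsilon') \cdot h_{L^{\otimes m}}(x) + \mathrm{O}(1)
$$
for some $\epsilon' > 0$, after a further adjustment of $m$ relative to $\epsilon$.

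An application of Corollary \ref{general:position:logarithmic:linear:scattering:subspace:thm} to $|L^{\otimes m}|$ with the sections $\bigcup_i \{s^{(i)}_{v,0}, \dots, s^{(i)}_{v,n_m}\}$, for each $v \in S$, then completes the argument. It provides, for every partition class, a further decomposition into finitely many sub-classes, each contained up to finitely many exceptions in a finite union of proper linear sections of $|L^{\otimes m}|$, and each furnished with an $\FF$-linearly independent subfamily $s_{v,j_0},\dots,s_{v,j_{n_m}}$ together with scattering weights $e_{v,i}$ satisfying $\sum_{v,i} e_{v,i} > n_m + 1$ and the individual Weil-function inequalities of the theorem. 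Amalgamating these loci with $\operatorname{Bs}(L) \cup \bigcup_{i} \operatorname{Supp}(D_i)$ yields the Diophantine exceptional set $Z$ in the claimed form. The \emph{optimality} of the constants $\eta(L, D_i)$ is inherited from the optimality in the Ru-Vojta machinery and from the tightness of the filtration asymptotics as $m \to \infty$.

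The principal obstacle I anticipate is verifying the linearly general position condition of Corollary \ref{general:position:logarithmic:linear:scattering:subspace:thm} for the composite family of sections harvested from all $q$ filtrations simultaneously. The proper intersection of the $D_i$ supplies the qualitative geometric input, but converting it into the required algebraic independence statement on $V_{m,\FF}$ will likely demand a generic perturbation of the bases, a further enlargement of $m$ so that $L^{\otimes m}$ separates jets of sufficiently high order at the relevant intersection strata, and careful bookkeeping of the $\mathrm{O}_v(1)$ and $\mathrm{O}(h_L/m)$ error terms so that $\epsilon' > 0$ survives all simplifications.
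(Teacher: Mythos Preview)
Your sketch has a genuine gap in the filtration step. You propose, for each $v\in S$, to pick a single index $i(v)$ maximizing $\eta(L,D_i)\lambda_{D_i}(x,v)$ and to use only the basis adapted to the one--divisor filtration by order of vanishing along $D_{i(v)}$. That basis records, via $\sum_j \lambda_{s^{(i(v))}_{v,j}}(x,v) \gtrsim \bigl(\sum_j \ord_{D_{i(v)}}(s^{(i(v))}_{v,j})\bigr)\lambda_{D_{i(v)}}(x,v)$, only the contribution of $D_{i(v)}$ at $v$. But when $x$ is $v$-adically close to an intersection stratum $\bigcap_{i\in\sigma}\operatorname{Supp}(D_i)$ with $|\sigma|>1$, the full quantity $\sum_i \eta(L,D_i)\lambda_{D_i}(x,v)$ can exceed the single term $\eta(L,D_{i(v)})\lambda_{D_{i(v)}}(x,v)$ by a bounded factor but not by a factor absorbable into $\epsilon$; tracing your inequalities one only obtains $\sum_{v,j}\lambda_{s^{(i(v))}_{v,j}}(x,v)\geq \tfrac{1}{q}(n_m+1)(1+\epsilon)h_{L^{\otimes m}}(x)+\mathrm{O}(1)$, which is far too weak to trigger Corollary~\ref{general:position:logarithmic:linear:scattering:subspace:thm}. (Separately, your ``identity'' $\lambda_{s^{(i)}_{v,j}}(x,v)=\ord_{D_i}(s^{(i)}_{v,j})\lambda_{D_i}(x,v)+\mathrm{O}_v(h_L(x)/m)$ is only the one-sided inequality $\geq$ with error $\mathrm{O}_v(1)$; but this is secondary.)

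What the paper does instead is precisely engineered to capture the joint contributions: one indexes not by a single $i$ but by pairs $(\sigma,\mathbf{a})$ with $\sigma\in\Sigma=\{\sigma\subseteq\{1,\dots,q\}:\bigcap_{i\in\sigma}\operatorname{Supp}(D_i)\neq\emptyset\}$ and $\mathbf{a}$ a rational weight vector on $\sigma$, and takes bases $\mathcal{B}_{\sigma;\mathbf{a}}$ adapted to the multi-divisor filtration $\mathcal{F}(\sigma;\mathbf{a})_t=\H^0\bigl(X_\FF,L^{\otimes m}_\FF\otimes\sum_{\mathbf{b}}\Osh_{X_\FF}(-\sum_{i\in\sigma}b_iD_i)\bigr)$. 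The proper intersection hypothesis is used exactly here, to control the dimensions of these filtered pieces, and yields the pointwise bound that for every $v$ the \emph{full} sum $\sum_{i=1}^q\beta_i\lambda_{\mathcal{D}_i}(\cdot,v)$ is dominated (up to the $(1+d/b)$ factor) by $\max_{\sigma,\mathbf{a}}\sum_{s\in\mathcal{B}_{\sigma;\mathbf{a}}}\lambda_s(\cdot,v)$. Only then does Corollary~\ref{general:position:logarithmic:linear:scattering:subspace:thm} apply with the right exponent. Your concern about linearly general position is resolved automatically in this framework, since each $\mathcal{B}_{\sigma;\mathbf{a}}$ is already a basis of $\H^0(X_\FF,L^{\otimes m}_\FF)$ and one simply takes the maximum over all linearly independent subfamilies of their union.
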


Theorem \ref{Arithmetic:General:Thm:linear:scattering} complements our results from \cite{Grieve:points:bounded:degree}.  Its proof is given in Section \ref{Proof:Arithmetic:General:Thm:linear:scattering} and is based on the Ru-Vojta filtration construction (which has origins in the work of Corvaja-Zannier \cite{Corvaja:Zannier:2002}, Levin \cite{Levin:2009}, Autissier \cite{Autissier:2011}, Ru \cite{Ru:2017} and others).  In \cite{Grieve:points:bounded:degree}, this is exposed in detail and expanded upon to treat the case of points of bounded degree.   
Here, our novel description of the Diophantine exceptional set that arises in its conclusion is made possible by applying the Subspace Theorem, in the form of Theorem \ref{logarithmic:linear:scattering:subspace:thm}, which we derive here from its parametric formulation (Theorem \ref{logarithmic:parametric:subspace:thm}).  

It is also important to make note of the bigness assumption in the statement of Theorems \ref{logarithmic:FW:subspace:thm}, \ref{logarithmic:linear:scattering:subspace:thm} and \ref{Arithmetic:General:Thm:linear:scattering} and Corollary \ref{general:position:logarithmic:linear:scattering:subspace:thm}.  Indeed, as is indicated in the proof of these results, the Northcott property for big line bundles plays an important role.  
On the other hand, recall that there exists Subspace Theorem inequalities for linear systems, without  the assumption of bigness for  the given linear system.   (See for instance \cite[Theorem 2.10]{Ru:Vojta:2016} and \cite[Theorem 3.3]{Grieve:points:bounded:degree}.)  Here the bigness assumption is used to deduce the linear scattering Subspace Theorem result (Theorem \ref{logarithmic:linear:scattering:subspace:thm}) from Theorem \ref{logarithmic:parametric:subspace:thm}.

As some final observations, and to help place matters into perspective, in Section \ref{Iitaka:fibration:linear:sections} we apply the theory of Iitaka fibrations to determine the \emph{asymptotic nature} of the \emph{linear sections}  that are associated to a linear series.  We refer to Section \ref{linear:systems:rational:maps:v:adic:distances}, see Definition \ref{linear:section:defn}, for a precise definition of our concept of linear section with respect to a linear system.  It builds on \cite[Definition 3.1]{Grieve:points:bounded:degree}.

\subsection*{Acknowledgements} 
I thank the Natural Sciences and Engineering Research Council of Canada for their support via my grants RGPIN-2021-03821 and DGECR-2021-00218. 
This work benefited from trips to BIRS, Banff, during the Summer of 2022.  It is my pleasure to thank colleagues for their interest, encouragement and engagement on related topics.   Finally, I thank anonymous referees for carefully reading my manuscript and for offering helpful suggestions.

\section{Preliminaries}\label{Prelims}

In this article, our conventions and notations closely resemble those of \cite{Bombieri:Gubler} and \cite{Laz}.  We briefly indicate some of the main points here.

\subsection{Number fields, absolute values and multiplicative projective heights}

Let $\KK$ be a number field with set of places $M_{\KK}$ and fix an algebraic closure $\overline{\KK}$.  
  
If 
$v \in M_{\KK}\text{,}$ 
then $|\cdot|_v$ is its normalized absolute value.  Thus, as in \cite[p. 11]{Bombieri:Gubler}, if 
$v \in M_{\KK}$ 
lies above $p \in M_{\QQ}\text{,}$ 
then the restriction of $|\cdot|_v$ to $\QQ$ is $|\cdot|_p^{[\KK_v:\QQ_p]/[\KK:\QQ]}$.  Here, $\KK_v$ and $\QQ_p$ are the respective completions of $\KK$ and $\QQ$ at $v$ and $p$.  By these conventions, the product formula holds true with multiplicities equal to one.  Explicitly
$$
\prod_{v \in M_{\KK}} | \alpha |_v = 1 \text{ 
for $\alpha \in \KK^\times$.}
$$

If $\FF / \KK$ is a finite extension field, $\KK \subseteq \FF \subseteq \overline{\KK}$, and $v \in M_{\KK}$, then choose $w \in M_{\FF}$ with $w \mid v$ and put
$$
|\cdot|_{v,\KK} = |\cdot|_{v,\FF / \KK} := \left| \N_{\FF_w / \KK_v} (\cdot) \right|_{v}^{\frac{1}{[\FF_w:\KK_v]}} \text{.}
$$
Then $|\cdot|_{v,\KK}$ is an extension of $|\cdot|_v$ to $\FF$.

In terms of multiplicative projective heights, recall, that if 
$$
\mathbf{x} = [x_0:\dots:x_n] \in \PP^n(\KK)
$$
then its \emph{multiplicative height} with respect to the tautological line bundle $\Osh_{\PP^n}(1)$ is defined to be
$$
H_{\Osh_{\PP^n}(1)}(\mathbf{x}) = \prod_{v \in M_{\KK}} |\mathbf{x}|_v  \text{ 
where }
|\mathbf{x}|_v = \max_{i=0,\dots,n}|x_i|_v \text{.}
$$

\subsection{Local Weil and logarithmic height functions}\label{Weil:height:prelims}
Consider the situation of Setting \ref{Set:up}.  Building on the approach of  \cite[Section 2]{Grieve:2018:autissier} and \cite[Section 3]{Grieve:Divisorial:Instab:Vojta} we develop further the theory of local Weil functions with respect to the field extension $\FF / \KK$.  

Let $D$ be a Cartier divisor on $X_{\FF}$ with line bundle $\Osh_{X_{\FF}}(D)$ and meromorphic section $s = s_D$.  By a slight abuse of notation we also say that $D$ is a Cartier divisor on $X$ and defined over $\FF$.  Further we understand the set $(X \setminus \operatorname{Supp}(D))(\KK)$ to mean the set of those $\KK$-rational points $x \in X(\KK)$ whose image in $X_{\FF}(\FF)$ do not lie in the support of $D$. When no confusion is likely, in what follows, we employ variants of this notation.

Fixing globally generated line bundles $N$ and $M$ on $X_{\FF}$, with the property that
$$
\Osh_{X_{\FF}}(D) \simeq N \otimes M^{-1}
$$
together with a choice of respective generating sections 
$$\mathbf{s} := (s_0,\dots,s_k) \text{ and } \mathbf{t} := (t_0,\dots,t_\ell)$$ 
yields the data of a \emph{presentation} for $D$ (and defined over $\FF$).  (Compare with \cite[\S 2.2.1]{Bombieri:Gubler}.)

For example, as in \cite[Exercise II.7.5]{Hart}, if $N$ is very ample and if for some positive integer $m >0$ the line bundle
$$N^{\otimes m} \otimes \Osh_{X_{\FF}}(D)$$
is globally generated, then setting
$$M := N^{\otimes (m +1)}$$ 
we may write
$$\Osh_{X_{\FF}}(D) \simeq M^{-1} \otimes \left(N^{\otimes (m + 1)} \otimes \Osh_{X_{\FF}}(D)\right)\text{;}$$ 
in doing so, we obtain an expression of $\Osh_{X_{\FF}}(D)$ as a difference of two very ample line bundles.

Denoting the data of such a presentation as
\begin{equation}\label{Weil:function:eqn3}
\mathcal{D} = (s_D; N,\mathbf{s}; M, \mathbf{t})
\end{equation}
the corresponding \emph{local Weil function for $D$} with respect to a place 
 $v \in M_{\KK}$ 
 is given by
\begin{equation}\label{Weil:function:eqn11}
\lambda_{\mathcal{D}}(x,v) = \lambda_s(x,v) := \max_{j=0,\dots,k} \min_{i=0,\dots,\ell} \left| \frac{s_j}{t_i s_D }(x) \right|_{v, \KK} 
\end{equation}
for
\begin{equation}\label{Weil:function:eqn11:prime}
x \in \left( X \setminus \operatorname{Supp}(D) \right)(\KK) \text{.}
\end{equation}
In \eqref{Weil:function:eqn11}, we have fixed $w \in M_{\FF}$ with $w \mid v$.  

Note that in \eqref{Weil:function:eqn11:prime} if $D$ is strictly defined over $\FF$, in the sense that $D$ does not descend with respect to the field extension $\FF / \KK$, then
$$
\left( X \setminus \operatorname{Supp}(D) \right)(\KK) = X(\KK) \text{.}
$$
In either case, the intuitive sense for \eqref{Weil:function:eqn11} is to get a measure of the $v$-adic size of, or rather of the negative logarithmic $v$-adic distance to, the meromorphic function $s_D$, which is defined over $\FF$ and not necessarily over $\KK$, when evaluated at $X$'s $\KK$-points.

Recall, the finite set of places $S \subset M_{\KK}$.  The \emph{proximity function} of $D$ with respect to $S$ is defined to be
$$
m_S(x,D) := \sum_{v \in S} \lambda_{\mathcal{D}}(x,v) \text{.}
$$ 

As in  \cite[Theorem 2.2.11]{Bombieri:Gubler}, if $\mathcal{D}'$ and $\mathcal{D}$ are two presentations of $D$, then the corresponding local Weil functions with respect to a place 
 $v \in M_{\KK}$ 
 are related by
$$
\lambda_{\mathcal{D}'}(\cdot,v) = \lambda_{\mathcal{D}}(\cdot,v) + \mathrm{O}(1) \text{.}
$$

Further, as in \cite[Proposition 2.3.9]{Bombieri:Gubler},
 if $D$ is an effective Cartier divisor on $X_{\FF}$, then there exists a presentation $\mathcal{D}$ which has the property that
$$
\lambda_{\mathcal{D}}(x,v) \geq 0 \text{ for all }
x \in \left( X \setminus \operatorname{Supp}(D)\right)(\KK) \text{.}
$$

The concept of presentation for Cartier divisors is significant from the viewpoint of locally bounded metrics and local Weil functions.  (See \cite[\S 2.2--2.7]{Bombieri:Gubler} for more details.)   This is illustrated by the following example, which is important to what we do here.

\begin{example}[Compare with {\cite[Examples 2.7.4 and 2.7.7]{Bombieri:Gubler}}]\label{standard:metric:taut:PPn}
Fix a place $v \in M_{\KK}$.  On projective $n$-space $\PP^n$, the tautological line bundle $\Osh_{\PP^n}(1)$ has the standard metric $||\cdot||_{v,\KK}$.  It is \emph{locally bounded}, in the sense of \cite[Definition 2.7.1]{Bombieri:Gubler}, and is defined by the condition that
\begin{equation}\label{Weil:function:eqn4}
||\ell(\mathbf{x})||_{v,\KK} = ||\ell(\mathbf{x})||_{v,\FF / \KK} := \frac{|\ell(\mathbf{x})|_{v,\KK} }{ \max\limits_{0 \leq j \leq n } |x_j|_{v,\KK}} 
\end{equation}
for each linear form
\begin{equation}\label{Weil:function:eqn5}
\ell(x) \in \FF[x_0,\dots,x_n] \text{.}
\end{equation}

Each such linear form \eqref{Weil:function:eqn5} determines a presentation
\begin{equation}\label{Weil:function:eqn6}
\mathcal{H} := ( \ell(x); \Osh_{\PP^n_{\FF}}(1), (x_0,\dots,x_n); \Osh_{\PP^n_{\FF}}, (1) )
\end{equation}
of the hyperplane
$$
H := \operatorname{div}(\ell(x))
$$
that it defines.  There is a \emph{local Weil function}
$$
\lambda_{\mathcal{H}}(\mathbf{x},v) = \lambda_{\ell(x)}(\mathbf{x},v)
$$
for $H$ with respect to the presentation \eqref{Weil:function:eqn6} and the place $v$.  It has domain the set of points
$$
\mathbf{x} \in \left( \PP^n \setminus \operatorname{Supp}(H) \right)(\KK)
$$
and is defined by the condition that
\begin{align}\label{Weil:function:eqn10}
\begin{split}
\lambda_{\mathcal{H}}(\mathbf{x},v) & = - \log ||\ell(\mathbf{x})||_{v,\KK} \\
& = \max_{j = 0,\dots,n} \log \left| \frac{x_j}{\ell(\mathbf{x})} \right|_{v, \KK} \text{.}
\end{split}
\end{align}

The local Weil function \eqref{Weil:function:eqn10} is the local Weil function that is determined by $||\cdot||_{v,\KK}$, the locally bounded metric \eqref{Weil:function:eqn4} on the tautological line bundle $\Osh_{\PP^n}(1)$, with respect to the presentation \eqref{Weil:function:eqn6}.
\end{example}

Returning to general considerations, recall the construction of logarithmic height functions from the viewpoint of local Weil functions \cite[\S 2.3.3]{Bombieri:Gubler}.  Let $D$ be a Cartier divisor on $X$ defined over $\KK$ and with fixed presentation
$$
\mathcal{D} := (s_D; N, \mathbf{s}; M, \mathbf{t})
$$
defined over $\KK$.
Then for each $\KK$-point
$
x \in X(\KK)
$
there exist global sections 
$$s_i \in \H^0(X,N) \text{ and } t_j  \in \H^0(X,M) $$ 
which have the property that
$$
s_i(x) \not = 0 \text{ and } t_j(x) \not = 0 \text{.}
$$

As a consequence, the line bundle $\Osh_X(D)$ admits a meromorphic section 
$s := s_i \otimes t_j^{-1}\text{,}$ 
so that if 
$
D(s) := \operatorname{div}(s) 
$
is the Cartier divisor that corresponds to $s$, then
$
x \not \in \operatorname{Supp}(D(s)) 
$.

In particular, 
$$
\mathcal{D}(s) := (s; N, \mathbf{s}; M, \mathbf{t})
$$
is a presentation of the Cartier divisor $D(s)$.  

In this way, up to a constant term $\mathrm{O}(1)$, the \emph{logarithmic height function} $h_{\Osh_X(D)}(\cdot)$ may be described as
$$
h_{\Osh_X(D)}(x) := \sum_{v \in M_{\KK}}\lambda_{\mathcal{D}(s)}(x,v) + \mathrm{O}(1) \text{.}
$$

We conclude this subsection by recalling the description of logarithmic projective heights via the viewpoint of presentations of Cartier divisors.

\begin{example}[{\cite[Example 2.3.2]{Bombieri:Gubler}}]\label{log:height}
The coordinate hyperplane 
$$
H := \{ \mathbf{x} \in \PP^n(\KK) : x_0 = 0 \} \subseteq \PP^n_{\KK}
$$
admits the presentation
$$
\mathcal{H} = (x_0 ; \Osh_{\PP^n}(1), (x_0,\dots,x_n); \Osh_{\PP^n}, (1)) \text{.}
$$
Thus, if $\mathbf{x} \in \PP^n(\KK)$
and 
$x_0 \not = 0\text{,}$
then
\begin{align*}
\begin{split}
h_{\Osh_{\PP^n}(1)}(\mathbf{x}) & = \sum_{v \in M_{\KK}} \max_k \log \left| \frac{x_k}{x_0} \right|_v \\
& = \sum_{v \in M_{\KK}} \lambda_{\mathcal{H}}(\mathbf{x},v) \text{.}
\end{split}
\end{align*}
\end{example}

\subsection{Asymptotics of linear series}\label{base:loci}
We fix some notation and conventions and recall a few concepts that pertain to asymptotic aspects of linear series.  Our approach follows that of \cite{Laz} closely.

In what follows $X$ is a geometrically irreducible projective variety over a number field $\KK$.  
We let $L$ denote a line bundle on $X$ and defined over $\KK$.  
We denote by $X_{\overline{\KK}}$ and $L_{\overline{\KK}}$ their base change to $\overline{\KK}$.  Note that the context that we consider here is slightly more general than that of Setting \ref{Set:up}.
  
Recall the \emph{semigroup} of $L$
$$
\mathrm{N}(X,L) := \{ m \geq 0 : \H^0(X,L^{\otimes m}) \not = 0 \} \text{.}
$$
When
$
\mathrm{N}(X,L) \not = (0) \text{,}
$
all sufficiently large elements of $\mathrm{N}(X,L)$ are multiples of a largest single natural number
$
e = e(L) \geq 1 \text{;}
$
this is $L$'s \emph{exponent}.

Let 
$$\pi \colon Y \rightarrow X_{\overline{\KK}}$$ 
be the normalization of $X_{\overline{\KK}}$.   Then the \emph{Iitaka dimension} of $L$ can be described as
$$
\kappa(X,L) = \kappa(Y,\pi^*L) := \max_{m \in \mathrm{N}(Y,\pi^*L)} \{\dim \phi_{|\pi^*L^{\otimes m}|} (Y) \} \text{.}
$$
Here
\begin{equation}\label{rational:map:eqn}
\phi_m = \phi_{|\pi^*L^{\otimes m}|} \colon Y \dashrightarrow \PP^{n_m}
\end{equation}
is the rational map that is defined by the complete linear series $|\pi^*L^{\otimes m}_{\overline{\KK}}|$.  In what follows, let $Y_{m}$ be the closure of the image of the rational map \eqref{rational:map:eqn}.  

Recall that $L$ is said to be \emph{big} when 
$$
\kappa(X,L) = \dim X \text{.}
$$

When $\kappa(X,L) \geq 0$, given $m \geq 1$, let $\operatorname{Bs}(|mL_{\overline{\KK}}|)$ be the base locus of $|L^{\otimes m}_{\overline{\KK}}|$.  (If $|L^{\otimes m}| = \emptyset$, then $\operatorname{Bs}(|mL_{\overline{\KK}}|) = X_{\overline{\KK}}$.)  The \emph{stable base locus} of $L$ is the Zariski closed subset
$$
\operatorname{Bs}(L) = \bigcap_{m \geq 1} \operatorname{Bs}(|mL_{\overline{\KK}}|) \text{;}
$$
there exists a positive integer $m_0 > 0$ which has the property that 
$$
\operatorname{Bs}(L) = \operatorname{Bs}(|mm_0L_{\overline{\KK}}|) 
$$
for all $m \gg 0$, \cite[Proposition  2.1.21]{Laz}.

For later use, we recall the main theorem about \emph{Iitaka fibrations}.  (See, for instance \cite[Theorem 2.1.33]{Laz} or \cite[Lemma 1.2]{Mori:1985}.)  In particular, if $\kappa(X,L) > 0$, then for all sufficiently large $m \in \mathrm{N}(X,L)$, the rational mappings
$$
\phi_m = \phi_{|\pi^*L^{\otimes m}|} \colon Y \dashrightarrow Y_m \subseteq \PP^{n_m}
$$
are all birationally equivalent to some \emph{algebraic fibre space}
$$
\phi_{\infty} \colon X_{\infty} \rightarrow Y_{\infty} 
$$
between normal projective varieties $X_{\infty}$ and $Y_{\infty}$.  Especially, the morphism $\phi_{\infty}$ is surjective and has connected fibres.  It is unique up to birational equivalence and is called the \emph{Iitaka fibration} of $L$.

\section{A formulation of the Parametric Subspace Theorem}\label{Parametric:Subspace}
Fix a finite subset $S \subset M_{\KK}$ 
and for each place $v \in S$ fix a collection of $\FF$-linearly independent linear forms
\begin{equation}\label{twist:height:linear:forms}
\ell_{v0}(x), \dots, \ell_{vn}(x) \in \FF[x_0,\dots,x_n] \text{.}
\end{equation}

Fix a real number 
$Q \geq 1$ 
together with a collection of real numbers $c_{vi} \in \RR\text{,}$
for all $v \in S$ 
and $i = 0,\dots,n$, which have the property that
$$
\sum_{i=0}^n c_{vi} = 0 \text{.}
$$

For points of projective $n$-space $\mathbf{x} \in \PP^n(\KK)$ 
put
\begin{equation}\label{twist:height}
H_Q(\mathbf{x}) := \left( \prod_{v \in S} \left(  \max_{0 \leq i \leq n} ||\ell_{vi}(\mathbf{x})||_{v,\KK}  \cdot Q^{-c_{vi}} \right) \right) \cdot H_{\Osh_{\PP^n}(1)}(\mathbf{x}) \text{.}
\end{equation}
This is the \emph{twisted multiplicative height} of 
$\mathbf{x} \in \PP^n(\KK)\text{,}$
with respect to the linear forms \eqref{twist:height:linear:forms} and the real numbers $Q$, $c_{vi}$, for $v \in S$ and $i =0,\dots,n$.  Here $||\ell_{vi}(\mathbf{x})||_{v,\KK}$ is defined as in \eqref{Weil:function:eqn4}.

Note that the twisted multiplicative height \eqref{twist:height} may also be described as
\begin{equation}\label{twist:height:projective}
H_Q(\mathbf{x}) = \prod_{v \in S} \left( \max_{0 \leq i \leq n} |\ell_{vi}(\mathbf{x})|_{v,\KK} \cdot Q^{-c_{vi}} \right) \cdot \prod_{v \not \in S} |\mathbf{x}|_v \text{.}
\end{equation} 
(Compare with \cite[Equation (1.3), p. 514]{Evertse:Ferretti:2013}.)

That the twisted height function 
\eqref{twist:height:projective} can be expressed in the form \eqref{twist:height} is a key point to what follows.

In the proof of our main results, our starting point is the following expanded form of the Parametric Subspace Theorem from \cite[p. 515]{Evertse:Ferretti:2013}.

\begin{theorem}[Parametric Subspace Theorem {\cite{Evertse:Ferretti:2013}}]\label{Parametric:Subspace:Thm}  With the notation and hypothesis as above let $\epsilon > 0$.  Then there exists a real number $Q_0 > 1$ and a finite collection of proper linear subspaces
$$
T_1,\dots,T_{t} \subsetneq \PP^n_{\KK}
$$
which are such that for all $Q \geq Q_0$ there is a subspace
$$
T_{j_Q} \in \{T_1,\dots,T_{t}\}
$$
which contains all 
$
\mathbf{x}  \in \PP^n(\KK)
$
which verify the twisted height inequality
\begin{equation}\label{parametric:twisted:height:inequality}
H_Q(\mathbf{x}) \leq Q^{-\epsilon} \text{.}
\end{equation}
Here, $H_Q(\mathbf{x})$ is the twisted height of $\mathbf{x}$ as defined in \eqref{twist:height}.
\end{theorem}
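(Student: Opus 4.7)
The plan is to derive Theorem~\ref{Parametric:Subspace:Thm} by directly invoking the Evertse--Ferretti Absolute Parametric Subspace Theorem from \cite[Theorems 2.1--2.3 and p.~515]{Evertse:Ferretti:2013}, after matching the twisted height $H_Q(\mathbf{x})$ of \eqref{twist:height:projective} with the twisted height in their statement and then descending the resulting exceptional subspaces from $\overline{\KK}$ down to $\KK$.

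The first step is to verify that $H_Q(\cdot)$ descends to a well-defined function on $\PP^n(\KK)$: rescaling homogeneous coordinates by $\alpha \in \KK^\times$ multiplies each factor in \eqref{twist:height:projective} by $|\alpha|_{v,\KK} = |\alpha|_v$, and these cancel across all $v \in M_{\KK}$ by the product formula, while the normalization $|\cdot|_{v,\KK} = |\mathrm{N}_{\FF_w/\KK_v}(\cdot)|_v^{1/[\FF_w:\KK_v]}$ is independent of the choice of $w \mid v$. Next, after (if necessary) enlarging $\FF$ to a Galois extension of $\KK$ containing all coefficients of the $\ell_{vi}$, I would interpret the data $(\ell_{vi}, c_{vi})_{v \in S,\, 0 \leq i \leq n}$, together with chosen extensions of each $v \in S$ to $\FF$, as input to the formulation of the parametric theorem on \cite[p.~515]{Evertse:Ferretti:2013}. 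Their theorem produces $Q_0 > 1$ and finitely many proper linear subspaces $T'_1, \dots, T'_t \subsetneq \PP^n_{\overline{\KK}}$ such that, for each $Q \geq Q_0$, every solution to \eqref{parametric:twisted:height:inequality} lies in some $T'_{j_Q}$. To descend to $\KK$, I would then replace each $T'_j$ by $T_j := \bigcap_{\sigma \in \mathrm{Gal}(\overline{\KK}/\KK)} \sigma(T'_j)$, which is defined over $\KK$, remains a proper linear subspace of $\PP^n_{\KK}$, and contains every $\KK$-rational point of $T'_j$ since $\KK$-rational points are Galois-fixed; only finitely many distinct subspaces arise this way.

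The main obstacle I anticipate is not mathematical but notational: carefully verifying that the weight convention $\sum_i c_{vi} = 0$, the normalization of $|\cdot|_{v,\KK}$ through the field norm, and the contribution $\prod_{v \notin S} |\mathbf{x}|_v$ in \eqref{twist:height:projective} align precisely with the hypotheses and twisted height used in the Evertse--Ferretti statement. No new quantitative ingredient beyond their parametric subspace theorem should be required, so the task reduces to a careful translation of notation followed by the Galois descent sketched above.
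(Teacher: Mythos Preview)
Your proposal is correct and follows essentially the same route as the paper: reduce to the Evertse--Ferretti parametric theorem after passing to a Galois extension, then descend the exceptional subspaces to $\KK$. Two cosmetic differences are worth noting. First, where you speak of ``chosen extensions of each $v \in S$ to $\FF$'' and leave the height comparison as a notational exercise, the paper makes this explicit by spreading the forms over \emph{all} places $w \mid v$ via the Galois action (setting $s'_{wi} := \sigma(s_{vi})$ when $w = v' \circ \sigma^{-1}$) and then verifying directly that the resulting twisted height $H_Q'$ over $\FF$ agrees with $H_Q$ on $\KK$-points. Second, for the descent the paper replaces each $T'_j$ by the $\KK$-span of its $\KK$-rational points rather than by the intersection $\bigcap_\sigma \sigma(T'_j)$; both constructions yield proper $\KK$-defined subspaces containing the same $\KK$-points, so either works.
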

\begin{proof}
The case that each of the linear forms $\ell_{vi}(x)$ has coefficients in $\KK$ follows as a special case of the Absolute Parametric Subspace Theorem \cite[p. 515]{Evertse:Ferretti:2013}.  To treat the more general case, where the $\ell_{vi}(x)$ have coefficients in $\FF$, we argue as in \cite[Remark 7.2.3]{Bombieri:Gubler}.  (See also the arguments given in  \cite[Proposition 2.1]{Grieve:2018:autissier} and \cite[Proof of Theorem 5.2]{Grieve:Function:Fields}.)

First, there is no loss in generality by assuming that $\FF / \KK$ is a Galois extension.  Let $S'\subset M_{\FF}$ be defined by the condition that 
$$
S' := \{w \in M_{\FF} : w \mid v \text{ and } v \in S \} \text{.}
$$
For each $v \in S$, fix $v' \in S'$ with $v' \mid v$ and consider the absolute value
$$
|\cdot|_{v',\KK} := |\N_{\FF_{v'}/\KK_v}(\cdot)|_v^{\frac{1}{[\FF_{v'}:\KK_v]}} \text{.}
$$
Now, if $w \in S'$ and $w \mid v$, then there exists $\sigma \in \operatorname{Gal}(\FF/\KK)$ with $w = v' \circ \sigma^{-1}$; set
$$
s'_{wi} := \sigma(s_{vi}) \text{ 
for $i = 0,\dots,n$.  }
$$

Fix $\epsilon > 0$ and consider for real numbers $Q \geq 1$ the twisted height function
$$
H_Q'(\mathbf{x}) = \prod_{w \in S'} \left( \max_{0 \leq i \leq n} |\ell_{vi}(\mathbf{x})|_{w} \cdot Q^{-c_{wi}} \right) \cdot \prod_{w \not \in S'} |\mathbf{x}|_w 
$$
defined for points
$
\mathbf{x}  \in \PP^n(\FF) \text{.}
$

Working over $\FF$ the conclusion of the Parametric Subspace Theorem, \cite[p. 515]{Evertse:Ferretti:2013}, is that there exists a real number  $Q_0 > 1$ and a finite collection of proper linear subspaces 
$$
T'_1,\dots,T_t' \subsetneq \PP^n_{\FF}
$$
such that for all $Q \geq Q_0$ there is a subspace
$$
T'_{j_Q} \in \{T_1',\dots,T_t'\}
$$
which contains those
$
\mathbf{x} \in \PP^n(\FF)
$
which have the property that
$$
H_Q'(\mathbf{x}) \leq Q^{-\epsilon} \text{.}
$$

On the other hand, if
$
\mathbf{x} \in \PP^n(\KK)
$
then
$$
H_Q(\mathbf{x}) = H_Q'(\mathbf{x}) \text{.}
$$
The conclusion desired by Theorem \ref{Parametric:Subspace:Thm} then follows by replacing each of the linear subspaces 
$$
T'_i  \subsetneq \PP^n_{\FF}
$$
for $i=1,\dots,t$, by 
$$
T_i := \operatorname{span}_{\KK} \left\{ \mathbf{x} \in \PP^n(\KK) : \mathbf{x} \in T'_i \right\} \text{.}
$$
\end{proof}

\section{Linear systems, rational maps and $v$-adic distances}\label{linear:systems:rational:maps:v:adic:distances}
In this section we give a construction and define auxiliary concepts which we require to formulate our logarithmic  Parametric Subspace Theorem for big linear systems (Theorem \ref{logarithmic:parametric:subspace:thm}).  They build on several viewpoints including \cite[Proposition 4.2]{Autissier:2011}, \cite[Theorem 2.10]{Ru:Vojta:2016}, \cite[Proposition 2.1]{Grieve:2018:autissier}, or \cite[Theorem 3.3]{Grieve:points:bounded:degree}.

Working over a base number field $\KK$ let 
$$
0 \not = V \subseteq \H^0(X,L)
$$
be a nonzero subspace for $L$ an effective line bundle on a geometrically irreducible projective variety $X$.  Let $\operatorname{Bs}(|V|)$ be the base locus of the linear system $|V|$ and $\Ish$ its ideal sheaf.  Fix a basis $s_0,\dots,s_n$ for $V$.  

Let 
$$
\phi \colon X \dashrightarrow \PP^n_{\KK}
$$
be the rational map that is determined by the sections $s_0,\dots,s_n$ and
$$
\phi' \colon X' \rightarrow \PP^n_{\KK}
$$
its extension for 
$$
\pi \colon X' := \operatorname{Bl}_{\Ish}(X) \rightarrow X
$$
the blowing-up of $X$ along $\Ish$.  

Note that the sections $s_0,\dots,s_n$ generate $L$ over the Zariski open subset
$$
U := X \setminus \operatorname{Bs}(|V|) \text{.}
$$
If 
$x \in X\text{,}$ 
then
$$
\Ish_x \simeq \Osh_{X,x}
$$
if and only if $x \in U\text{.}$  
There is the following commutative diagram 

\begin{equation}\label{resolve:base:locus}
\begin{tikzcd}
X' \arrow[drr, "\phi ' "] \arrow[d,"\pi"'] & & \\
X \arrow[r, hookleftarrow] \arrow[rr, dashrightarrow, bend right =20, "\phi"'] & U  \arrow[r]& \PP^n_{\KK} \text{.}
\end{tikzcd}
\end{equation}

The following concept is from \cite{Grieve:points:bounded:degree}. 

\begin{defn}[{\cite[Definition 3.1]{Grieve:points:bounded:degree}}]\label{linear:section:defn} The \emph{proper linear sections}
$$
\Lambda \subsetneq X
$$
of $X$ with respect to $|V|$
are described by 
$$
\Lambda = \pi \left( \phi ^{' -1} (T) \right)
$$
for proper linear subspaces
$$
T \subsetneq \PP^n_{\KK} \text{.}
$$
\end{defn}

For later use, we also define the concept of \emph{density for rational points with respect to a given linear system}.  It builds on earlier ideas of Schmidt \cite[p. 706]{Schmidt:1993} and Evertse \cite[p. 240]{Evertse:1996}.

\begin{defn}\label{density:rational:points} Consider  a non-empty subset of $X(\KK)$.
It is called \emph{dense} with respect to the linear system $|V|$ if it is contained in no finite union of $|V|$'s proper linear sections.
\end{defn}

Returning to the topic of resolving the locus of indeterminacy of the linear system $|V|$,  given a finite extension field $\FF / \KK$ consider base  change of the commutative diagram \eqref{resolve:base:locus}
\begin{equation}\label{resolve:base:locus:base:change}
\begin{tikzcd}
X'_{\FF} \arrow[drr, "\phi_{\FF} ' "] \arrow[d,"\pi_{\FF}"'] & & \\
X_{\FF} \arrow[r, hookleftarrow] \arrow[rr, dashrightarrow, bend right =20, "\phi_{\FF}"'] & U_{\FF}  \arrow[r]& \PP^n_{\FF} \text{.}
\end{tikzcd}
\end{equation}

Recall that the global sections
$$
\pi^* s_i \in \H^0(X', \pi^*L)
$$
generate a line bundle $L'$ on $X'$.  It is a coherent subsheaf of $\pi^* L$.  In what follows, let $s'_i$ be the global section of $L'$ that is determined by $\pi^* s_i$.

Now the line bundle $L'$ and the global sections
$$
 s_i' \in \H^0(X',L')
$$
define the morphism $\phi'$, in \eqref{resolve:base:locus}, and determine the morphism $\phi'_{\FF}$ in the diagram \eqref{resolve:base:locus:base:change}.  The restriction of $\phi'$ to $\pi^{-1}(U)$ corresponds to $\phi$ via the natural isomorphism
$$
\pi \colon \pi^{-1}(U) \xrightarrow{\sim} U \text{.}
$$

As a consequence for local Weil functions, if 
$$
\ell(x) = a_0 x_0 + \hdots + a_n x_n\in \H^0(\PP^n_{\FF}, \Osh_{\PP^n_\FF}(1)) \text{, }
$$
for $a_i \in \FF$ and $i = 0,\dots,n$, is a linear form that pulls back to
$$
s' = a_0s_0'+\hdots+a_ns_n' \in \H^0(X'_{\FF},L'_{\FF})
$$
and corresponds to 
$$
s = a_0s_0+\hdots+a_ns_n \in V_{\FF} 
$$ 
then, for each place $v \in S$
\begin{equation}\label{Weil:function:eqn:resolve:base:locus}
\lambda_{s}(x,v) = \lambda_{\pi^*_{\FF} s}(x',v) = \lambda_{s'}(x',v) = \lambda_{\ell(x)}(\phi'_{\FF}(x'),v) 
\end{equation}
for all
\begin{equation}\label{domain:1}
x \in \left( X \setminus \left( \operatorname{Bs}(|V|) \bigcup \operatorname{Supp}(\operatorname{div}(s)) \right)\right)(\KK) 
\end{equation}
where 
\begin{equation}\label{domain:2}
x' = \pi^{-1}(x) \in \left( X' \setminus \pi^{-1}\left( \operatorname{Bs}(|V|) \bigcup \operatorname{Supp}(\operatorname{div}(s)) \right) \right)(\KK) \text{.}
\end{equation}

Finally, in terms of height functions, for all such $x$ and $x'$, as above in \eqref{domain:1} and \eqref{domain:2}, respectively, it holds true that
\begin{equation}\label{height:function:eqn:resolve:base:locus}
h_L(x) = h_{L'}(x') + \mathrm{O}(1) = h_{\Osh_{\PP^n}(1)}(\phi'(x')) + \mathrm{O}(1) \text{.}
\end{equation}

\section{Proof of Theorem \ref{logarithmic:parametric:subspace:thm}}\label{parametric:proof}

The first step in the proof of Theorem \ref{logarithmic:parametric:subspace:thm} is to consider the logarithmic form of Theorem \ref{Parametric:Subspace:Thm}.  We express its conclusion in terms of local Weil and  logarithmic height functions.

\begin{proposition}\label{log:Parametric:Subspace:Thm}
Fix a finite subset $S \subset M_{\KK}$.  For each $v \in S$ and all $i = 0,\dots,n$, fix a collection of linearly independent linear forms
$$
\ell_{vi}(x) \in \FF[x_0,\dots,x_n] 
$$
together with a collection of real numbers $c_{vi} \in \RR$, for $v \in S$ and $i = 0,\dots,n$, which have the property that
$$
\sum_{i=0}^n c_{vi} = 0 
$$
for all $v \in S$.  For each of the linear forms $\ell_{vi}(x)$, let $\lambda_{\ell_{vi}(x)}(\cdot,v)$ be the local Weil function with respect to $v$ given by \eqref{Weil:function:eqn10}.

Let $\delta > 0$.  Then there exist a real number $Q_0 > 1$ and a finite collection of proper linear subspaces
$$
T_1,\dots,T_{t} \subsetneq \PP^n_{\KK}
$$
such that for all $Q \geq Q_0$ there is a subspace
$$
T_{j_Q} \in \{T_1,\dots,T_{t}\}
$$
which contains all 
$$
\mathbf{x}  \in \left(\PP^n(\KK) \setminus \left( \bigcup_{\substack{v \in S \\ i = 0,\dots,n} } \operatorname{Supp}(\ell_{vi}) \right)\right)(\KK)
$$
which verify the logarithmic twisted height inequality
$$
\sum_{v \in S} \min_{0 \leq i \leq n} \left( \lambda_{\ell_{vi}(x)}(\mathbf{x},v)  + c_{vi} \cdot \log(Q) \right)  
\geq h_{\Osh_{\PP^n}(1)}(\mathbf{x}) + \delta \log(Q) \text{.}
$$
\end{proposition}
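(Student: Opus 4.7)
The plan is to deduce the proposition directly from Theorem \ref{Parametric:Subspace:Thm} by passing from the multiplicative twisted height inequality to its logarithmic form, and then interpreting the resulting expression in terms of the hyperplane local Weil functions recalled in Example \ref{standard:metric:taut:PPn}. Set $\epsilon := \delta$ and apply Theorem \ref{Parametric:Subspace:Thm} to produce the threshold $Q_0 > 1$ and the finite collection of proper linear subspaces $T_1, \dots, T_t \subsetneq \PP^n_{\KK}$. The claim is that these same data work for the proposition, once the defining inequality is rewritten.

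The first step is to take logarithms of the product expression \eqref{twist:height} for $H_Q(\mathbf{x})$. Using the product-formula normalization of $H_{\Osh_{\PP^n}(1)}$ and the additive rule for logs of products and maxima, one obtains
\begin{align*}
\log H_Q(\mathbf{x}) &= \sum_{v \in S} \max_{0 \leq i \leq n}\Bigl( \log \lVert \ell_{vi}(\mathbf{x})\rVert_{v,\KK} - c_{vi}\log Q \Bigr) + h_{\Osh_{\PP^n}(1)}(\mathbf{x}).
\end{align*}
By the defining identity \eqref{Weil:function:eqn10}, $\lambda_{\ell_{vi}(x)}(\mathbf{x},v) = -\log \lVert \ell_{vi}(\mathbf{x})\rVert_{v,\KK}$, which turns each maximum inside the sum into the negative of a minimum:
\begin{align*}
\log H_Q(\mathbf{x}) &= -\sum_{v \in S} \min_{0 \leq i \leq n}\Bigl( \lambda_{\ell_{vi}(x)}(\mathbf{x},v) + c_{vi}\log Q \Bigr) + h_{\Osh_{\PP^n}(1)}(\mathbf{x}).
\end{align*}
The inequality $H_Q(\mathbf{x}) \leq Q^{-\delta}$ is thus equivalent, after rearrangement, to the logarithmic inequality stated in the proposition. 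Here the restriction that $\mathbf{x}$ lie outside the supports of the $\ell_{vi}$ is precisely what is needed to ensure each $\lambda_{\ell_{vi}(x)}(\mathbf{x},v)$ is well-defined in the sense of \eqref{Weil:function:eqn11:prime}.

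The second step is simply to invoke Theorem \ref{Parametric:Subspace:Thm} for this $\epsilon = \delta$: every $\mathbf{x} \in \PP^n(\KK)$ satisfying $H_Q(\mathbf{x}) \leq Q^{-\delta}$ lies in one of the subspaces $T_{j_Q}$, and by the equivalence established above, this is the same as the set of $\KK$-rational points outside the supports $\operatorname{Supp}(\ell_{vi})$ that satisfy the logarithmic twisted height inequality of the proposition. This yields the conclusion with the same $Q_0$ and the same list $T_1,\dots,T_t$.

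I do not expect a genuine obstacle here; the content is an entirely formal logarithmic translation between the multiplicative and additive bookkeeping, relying only on the explicit description \eqref{Weil:function:eqn10} of the hyperplane Weil function associated to the presentation \eqref{Weil:function:eqn6}. The only subtlety worth flagging is the sign swap that converts $\max$ into $\min$, which is what produces the minimum appearing inside the sum over $v \in S$; checking this carefully is the one computational point to be certain about before citing Theorem \ref{Parametric:Subspace:Thm}.
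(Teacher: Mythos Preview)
Your proposal is correct and follows essentially the same approach as the paper: both apply $-\log(\cdot)$ to the multiplicative twisted height inequality \eqref{parametric:twisted:height:inequality} of Theorem \ref{Parametric:Subspace:Thm}, expand using \eqref{twist:height} and the Weil function identity \eqref{Weil:function:eqn10}, and observe the $\max \leftrightarrow \min$ sign swap. The paper's version is slightly terser, but the logical content is identical.
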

\begin{proof}
Apply $-\log(\cdot)$ to the multiplicative twisted height inequality \eqref{parametric:twisted:height:inequality}, which is given in Theorem \ref{Parametric:Subspace:Thm}.  The result is that
\begin{equation}\label{twisted:height:key:eqn}
-\log\left(H_Q(\mathbf{x})\right) \geq \delta \cdot \log(Q) \geq 0  \text{.}
\end{equation}
The conclusion desired by Proposition \ref{log:Parametric:Subspace:Thm} then follows from the conclusion of Theorem \ref{Parametric:Subspace:Thm} since
$$
- \log\left( H_Q(\mathbf{x}) \right) = - \log\left( \prod_{v \in S}
 \left( 
\max_{0 \leq i \leq n} \frac{| \ell_{vi}(\mathbf{x})|_{v,\KK} }{|\mathbf{x}|_{v,\KK}} \cdot Q^{-c_{vi}} 
\right) \right) - h_{\Osh_{\PP^n}(1)}(\mathbf{x}) \text{.}
$$
Indeed, the quantity
$$
- \log \left( \prod_{v \in S} \left( \max_{0 \leq i \leq n} \frac{|\ell_{vi}(\mathbf{x})|_{v,\KK} }{|\mathbf{x}|_{v,\KK} } \cdot Q^{-c_{vi}}\right)\right)
$$
can be rewritten as
$$
\sum_{v \in S} \min_{0 \leq i \leq n} \left( - \log \left( 
\frac{ |\ell_{vi}(\mathbf{x})|_{v,\KK}}{|\mathbf{x}|_{v,\KK} }
 \right) + c_{vi} \cdot \log(Q) \right) \text{.}
 $$
In light of this, the twisted height inequality \eqref{twisted:height:key:eqn} can thus be rewritten in the form
$$
\sum_{v \in S} \min_{0 \leq i \leq n} \left( \lambda_{\ell_{vi}(x)}(\mathbf{x},v)  + c_{vi} \cdot \log(Q) \right)  
\geq h_{\Osh_{\PP^n}(1)}(\mathbf{x}) + \delta \cdot \log(Q) \text{.}
$$
\end{proof}

We now use Proposition \ref{log:Parametric:Subspace:Thm} to prove Theorem \ref{logarithmic:parametric:subspace:thm}. 
\begin{proof}[Proof of Theorem \ref{logarithmic:parametric:subspace:thm}]
Fix a basis $s_0,\dots,s_n$ for $V$.  Let $\operatorname{Bs}(|V|)$ be the base locus of the linear system $|V|$.  We now apply the considerations of  Section \ref{linear:systems:rational:maps:v:adic:distances} within our current context, especially the relations \eqref{Weil:function:eqn:resolve:base:locus} and \eqref{height:function:eqn:resolve:base:locus}.   

Fix linear forms
$$
\ell_{vi}(x) \in \H^0(\PP^n_{\FF},\Osh_{\PP^n_{\FF}}(1))
$$
for all $v \in S$ and all $i = 0,\dots,n$, which pull back to $s_{vi}'$ under $\phi'_{\FF}$ and correspond to $\pi^*_{\FF}s_{vi}$.  Then 
we may also write
 for each place $v \in S$ and each $i=0,\dots,n$
\begin{equation}\label{Weil:function:eqn:resolve:base:locus:1}
\lambda_{s_{vi}}(x,v) = \lambda_{\pi^*_{\FF} s_{vi}}(x',v) = \lambda_{s_{vi}'}(x',v) = \lambda_{\phi'^*_{\FF} \ell_{vi}(x)}(x',v) 
\end{equation}
where
$$
x \in \left( X \setminus \left( \operatorname{Bs}(|V|) \bigcup \operatorname{Supp}(\operatorname{div}(s_{vi})) \right)\right)(\KK) 
$$
and 
$$
x' = \pi^{-1}(x) \in \left( X' \setminus \pi^{-1}\left( \operatorname{Bs}(|V|) \bigcup \operatorname{Supp}(\operatorname{div}(s_{vi})) \right) \right)(\KK) \text{.}
$$

Theorem \ref{logarithmic:parametric:subspace:thm} then follows from the relation \eqref{Weil:function:eqn:resolve:base:locus:1} together with Proposition \ref{log:Parametric:Subspace:Thm} applied to the linear forms 
$$\ell_{v0}(x),\dots,\ell_{vn}(x) \in \FF[x_0,\dots,x_n] \text{ for all $v \in S \text{.}$}
$$
\end{proof}

\section{Proof of Theorem \ref{logarithmic:FW:subspace:thm}}

Similar to the approach of \cite[p.~ 514]{Evertse:Ferretti:2013}, the conclusion of Theorem  \ref{logarithmic:FW:subspace:thm} is implied by that of Theorem \ref{logarithmic:parametric:subspace:thm}.

\begin{proof}[Proof of Theorem \ref{logarithmic:FW:subspace:thm}]
Put
$$
\epsilon := - 1 + \frac{1}{n+1}\left( \sum_{v \in S} \sum_{i=0}^n d_{vi} \right) \text{.}
$$
For each $v \in S$ 
and all $i = 0,\dots,n$ set
$$
c_{vi} := -d_{vi} + \frac{1}{n+1} \sum_{j=0}^n d_{vj} \text{.}
$$
Then
$\epsilon > 0$
and
$$
\sum_{i=0}^n c_{vi} = 0 \text{ 
for all $v \in S$.
}
$$

Let 
\begin{equation}\label{solution:domain:eqn1}
x \in \left( X \setminus \left( \operatorname{Bs}(|V|) \bigcup \bigcup_{\substack{v \in S \\ i = 0,\dots,n} }\operatorname{Supp}(s_{vi}) \right) \right)(\KK)
\end{equation}
be a solution of the system of inequalities
$$
\lambda_{s_{vi}}(x,v) - d_{vi} \cdot h_L(x) + \mathrm{O}_v(1) \geq 0 
$$
for all $i = 0,\dots,n$ and all $v \in S$, and put 
$$
Q = \exp(h_L(x)) \text{.}
$$
Then the inequalities
$$
\sum_{v \in S} \left( \lambda_{s_{vi}}(x,v) + c_{vi} \cdot \log(Q) \right) \geq h_L(x) + \epsilon \cdot \operatorname{log}(Q) + \mathrm{O}(1)  
$$
for $i = 0,\dots,n$
are valid.  Thus, by considering all such solutions $x$, as above in \eqref{solution:domain:eqn1}, with sufficiently large height
$$h_L(x) \geq \log(Q_0) \gg 0$$ 
for a suitable real number $Q_0 > 1$,
the conclusion of Theorem \ref{logarithmic:FW:subspace:thm} follows from that of Theorem \ref{logarithmic:parametric:subspace:thm}.  Here, we employ the Northcott theorem, for big line bundles, compare with \cite[Theorem 2.4.9]{Bombieri:Gubler}, in order to conclude that there will be only finitely many solutions $x$ (and hence finitely many exceptional subspaces to add) which have sufficiently small height.
\end{proof}

\section{Proof of Theorem \ref{logarithmic:linear:scattering:subspace:thm} }\label{Log:Subspace:Proof}

By modifying the arguments of \cite[Section 21]{Evertse:Schlickewei:2002}, as suggested in \cite[p.~514]{Evertse:Ferretti:2013}, Theorem \ref{logarithmic:linear:scattering:subspace:thm} follows from Theorem \ref{logarithmic:FW:subspace:thm}, in light of Lemma \ref{Evertse:scattering:lemma4} below.  Lemma \ref{Evertse:scattering:lemma4} is a special case of the logarithmic form of \cite[Lemma 4]{Evertse:1984}.  

The proof of Theorem \ref{logarithmic:linear:scattering:subspace:thm} is interesting as it  involves a partitioning of the solution set into subsets.   Each subset solves a certain simultaneous system of Diophantine arithmetic inequalities.  Since only finitely many such subsets are required, Theorem \ref{logarithmic:linear:scattering:subspace:thm} thus follows upon repeated application of Theorem \ref{logarithmic:FW:subspace:thm}.  Such conceptual reasoning will be made more explicit throughout the proof of Theorem \ref{logarithmic:linear:scattering:subspace:thm}.  We prove Corollary \ref{general:position:logarithmic:linear:scattering:subspace:thm} after first proving Theorem \ref{logarithmic:linear:scattering:subspace:thm}.

\begin{lemma}[Compare with {\cite[Lemma 21.1]{Evertse:Schlickewei:2002}}  or {\cite[Lemma 4]{Evertse:1984}}]\label{Evertse:scattering:lemma4}
Fix a sufficiently small positive real number $c$, 
$0 < c \ll 1\text{,}$ 
and let $I$ be a finite set.
Then the set
$$
\mathcal{R}(c) := \left\{ \mathbf{c} = (c_i)_{i \in I}: c_i \in \RR_{\geq 0} \text{ and } \sum_{i \in I} c_i = c \right\} 
$$
admits a finite subset 
$
\mathcal{S}(c) \subseteq \mathcal{R}(c)
$
which has the property that for all 
$
\mathbf{b} = (b_i)_{i \in I} \text{ with $b_i \in \RR_{\geq 0}$}
$
there exists
$
\mathbf{a} = (a_i)_{i \in I}  \in \mathcal{S}(c)
$
which has the property that
$$
b_j \geq a_j \left(  \sum_{i \in I} b_i \right) \text{ 
for all $j \in I$. }
$$
\end{lemma}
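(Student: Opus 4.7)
The plan is to exhibit $\mathcal{S}(c)$ explicitly as a finite discretization of $\mathcal{R}(c)$ obtained by restricting to a common denominator $K$, and then to use a floor-function argument to extract an appropriate $\mathbf{a}$ for each given $\mathbf{b}$. First, I would dispose of the degenerate case $\sum_{i \in I} b_i = 0$ (both sides of the target inequality then vanish) and otherwise, setting $B := \sum_{i \in I} b_i > 0$ and $\beta_i := b_i / B$, I would reduce the problem to the following equivalent form: for each point $\beta$ on the standard simplex $\bigl\{ \beta \in \RR_{\geq 0}^{|I|} : \sum_i \beta_i = 1 \bigr\}$, find $\mathbf{a} \in \mathcal{S}(c)$ with $a_i \leq \beta_i$ for every $i \in I$; multiplying this inequality through by $B$ then recovers the conclusion $b_i \geq a_i \sum_j b_j$.

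Next, I would fix a positive integer $K$ satisfying $K(1-c)/c \geq |I|$, which is available whenever $c < 1$ and is where the smallness of $c$ enters (for instance $K = |I|$ works as soon as $c \leq 1/2$). With this $K$ chosen, I would take
$$
\mathcal{S}(c) := \Bigl\{ (c q_i / K)_{i \in I} : q_i \in \ZZ_{\geq 0},\ \sum_{i \in I} q_i = K \Bigr\},
$$
which is manifestly a finite subset of $\mathcal{R}(c)$. Given $\beta$ as above, I would set $q_i := \lfloor K \beta_i / c \rfloor \in \ZZ_{\geq 0}$, so that $c q_i / K \leq \beta_i$ by construction, and the elementary estimates $K/c - |I| \leq \sum_i q_i \leq K/c$ hold by summing the floor bounds. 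The lower estimate, combined with the choice of $K$, gives $\sum_i q_i \geq K$, whence I can decrease the $q_i$ by successive unit reductions to produce nonnegative integers $q_i' \leq q_i$ with $\sum_i q_i' = K$. The vector $\mathbf{a} := (c q_i'/K)_{i \in I}$ then belongs to $\mathcal{S}(c)$ and satisfies $a_i \leq c q_i / K \leq \beta_i$ for every $i$, which is the required componentwise bound.

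The one step that requires genuine care is calibrating the denominator $K$ against $|I|$ and $c$ so that the floor-adjustment step has enough slack to succeed, and this is precisely where the hypothesis $0 < c \ll 1$ is consumed. Everything else in the argument is bookkeeping with simplex and floor-function estimates, and the finiteness of $\mathcal{S}(c)$ is immediate from the finiteness of the set of compositions of $K$ into $|I|$ nonnegative integer parts.
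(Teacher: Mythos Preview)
Your argument is correct. The normalization to the simplex, the discretization via a common denominator $K$, and the floor--reduce step all work as stated; the calibration $K(1-c)/c \geq |I|$ is exactly what is needed to guarantee $\sum_i q_i \geq K$, and the degenerate case $\sum_i b_i = 0$ is handled cleanly.

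The paper, by contrast, does not prove this lemma at all: it simply records that the statement is a special case of the logarithmic form of \cite[Lemma~4]{Evertse:1984} and moves on. So your route is genuinely different in that it is self-contained and explicit, whereas the paper defers entirely to the cited reference. What your approach buys is a transparent, elementary construction of $\mathcal{S}(c)$ with an easily computed cardinality (the number of weak compositions of $K$ into $|I|$ parts), and it makes the role of the smallness hypothesis on $c$ completely visible. What the paper's citation buys is brevity and a pointer to the original source, where the lemma is stated in a slightly more general multiplicative form.
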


\begin{proof}
This is a special case of the logarithmic formulation of \cite[Lemma 4]{Evertse:1984}.
\end{proof}

We now use Lemma \ref{Evertse:scattering:lemma4} to establish Theorem \ref{logarithmic:linear:scattering:subspace:thm}.

\begin{proof}[Proof of Theorem \ref{logarithmic:linear:scattering:subspace:thm}]

Fix a sufficiently small positive real number $\epsilon > 0$.  Our aim is to ascertain qualitative features, that are expressed in terms of $|L|$'s linear sections, of the collection of those
 solutions 
\begin{equation}\label{scattering:solutions:domain}
x \in \left( X \setminus \left( \operatorname{Bs}(|V|) \bigcup \bigcup_{\substack{v \in S \\ i = 0,\dots,n}} \operatorname{Supp}(s_{vi}) \right)\right)(\KK) 
\end{equation}
to the inequality
\begin{align}\label{scattering:system}
\begin{split}
 \sum_{v \in S} \sum_{i=0}^n \lambda_{s_{vi}}(x,v) & \geq (n+1+\epsilon)h_{L}(x) + \mathrm{O}(1)  \end{split}
\end{align}
which have sufficiently large height
$h_L(x) \gg 0$.

In our study of the system \eqref{scattering:system}, similar to the approach of \cite[Section 21]{Evertse:Schlickewei:2002}, we distinguish amongst two classes of solutions.

\begin{itemize}
\item{
{\bf Type I}: are those solutions which admit an index $i$, $0 \leq i \leq n$, which is such that 
$$
 \sum_{v \in S} \lambda_{s_{vi}}(x,v) \geq (n+1+ \epsilon)h_{L}(x) + \mathrm{O}(1) \text{;}
$$
}
\item{
{\bf Type II}: are those solutions which satisfy the inequalities
$$
 \sum_{v \in S} \lambda_{s_{vi}}(x,v) < (n+1+\epsilon) h_{L}(x) + \mathrm{O}(1) \text{ for all $i = 0,\dots,n$.}
$$
}
\end{itemize}

In our study of the {\bf Type I} and {\bf Type II} solutions, by adjusting the constant $\mathrm{O}(1)$, if necessary, there is no loss in generality by assuming that
\begin{equation}\label{non:negative:lin:forms:weil}
\lambda_{s_{vi}}(x,v) \geq 0
\end{equation}
for all $i=0,\dots,n$, all $v \in S$ and all solutions $x$ of the form  \eqref{scattering:solutions:domain}.

\subsection*{Simultaneous Type I inequalities.}
Fix a {\bf Type I} solution $x$.   Then, there exists an index $i$, with $0 \leq i \leq n$ and 
$$
\sum_{v \in S} \lambda_{s_{vi}}(x,v) \geq (n+1+\epsilon)h_{L}(x) + \mathrm{O}(1) \text{.}
$$
Fix such an index $i$ and define the tuple 
\begin{equation}\label{type:one:soltn:tuple}
\mathbf{b} = (b_v)_{v \in S}
\end{equation}
 by the condition that
$$
\lambda_{s_{vi}}(x,v) = b_v h_{L}(x) + \mathrm{O}(1) \text{.}
$$
The tuple \eqref{type:one:soltn:tuple}, which depends on $i$ and $x$, has the property that
$$
b_v \geq 0 \text{ for each $v \in S$}
$$
and
$$
\sum_{v \in S} b_v \geq n+1+ \epsilon \text{.}
$$

We now consider consequences of Lemma \ref{Evertse:scattering:lemma4} applied to the sufficiently small positive real number
$$
c = 1 - \frac{\epsilon}{4(n+1)}
$$
and the finite set $S$.  

Indeed, we deduce from Lemma \ref{Evertse:scattering:lemma4} that the collection of {\bf Type I} solutions can be partitioned into finitely many subsets $\mathcal{S}_{\mathbf{a}}$ where each subset corresponds to a fixed tuple 
\begin{equation}\label{type:one:soltn:tuple:eqn:2}
\mathbf{a} = (a_v)_{v \in S}
\end{equation}
of nonnegative real numbers with the property that
\begin{equation}\label{type:one:soltn:tuple:eqn:2:prime}
\sum_{v \in S} a_v = 1 - \frac{\epsilon}{4(n+1)} \text{.}
\end{equation}

A given subset $\mathcal{S}_{\mathbf{a}}$ with corresponding weight vector \eqref{type:one:soltn:tuple:eqn:2}
consists of those {\bf Type I} solutions $x$ for which their vector \eqref{type:one:soltn:tuple} satisfies the condition that
$$
b_v \geq a_v \sum_{w \in S} b_w \geq a_v(n+1+\epsilon) \text{.}
$$
As a consequence, it follows that 
$$
\lambda_{s_{vi}}(x,v) > a_v(n+1+\epsilon)h_{L}(x) + \mathrm{O}(1)
$$
for all such {\bf Type I} solutions $x$ in the subset corresponding to the tuple \eqref{type:one:soltn:tuple:eqn:2}.

Now we need to do some rewriting of the sections $s_{vi}$ working on the blow-up of the base locus $\operatorname{Bs}(|V|)$, as in Section \ref{linear:systems:rational:maps:v:adic:distances}.  We view these sections $s_{vi}$ as having the property that $\pi^*s_{vi}$, their pullbacks to $X'$, correspond with the sections $s_{vi}'$ which are the pullbacks, with respect to $\phi'_{\FF}$, of the linear forms 
$$
\ell_{vi}(x) = a_{vi0}x_0+\hdots+ a_{vin}x_n \in \FF[x_0,\dots,x_n]
\text{ 
for all $v \in S$ and $i = 0,\dots,n$.
}
$$

Given a pair $(i,v)$, where $v \in S$ and $0 \leq i \leq n$, pick 
$j = j(i,v)$ 
with 
$0 \leq j(i,v) \leq n$ 
and 
$$
\left| a_{vij} \right|_v = \max \{ |a_{vi0}|_v,\dots, |a_{vin}|_v \} \text{.}
$$
In fact, by homogeneity of the linear forms, and by adjusting the constant $\mathrm{O}(1)$, if necessary, there is no loss in generality by assuming that each form $\ell_{vi}(x)$ has the property that
$$
a_{v i j(i,v) } = 1 \text{.}
$$

Consider, for each $v \in S$, the collection of linear forms $\ell_{vi}(x)$, $x_k$ for $k = 0,\dots,n$ and $k \not = j(i,v)$.  Relabel this collection of linear forms as
$$
m_{v0}(x) = \ell_{vi}(x),m_{v1}(x),\dots,m_{vn}(x) \text{.}
$$

Then for each tuple \eqref{type:one:soltn:tuple:eqn:2} define the tuple of real numbers
\begin{equation}\label{type:one:soltn:tuple:eqn:3}
 (e_{vi})_{\substack{v \in S \\ i = 0,\dots,n }}
\end{equation}
by the condition that
\begin{equation}\label{type:one:soltn:tuple:eqn:4}
e_{vi} = \begin{cases}
 a_v(n+1+\epsilon) & \text{ for $i = 0$; and } \\
0 & \text{ for $i = 1,\dots,n$.}
\end{cases}
\end{equation}
Upon adjusting the constant $\mathrm{O}(1)$, if necessary,  the tuple \eqref{type:one:soltn:tuple:eqn:3} and the linear forms 
$$m_{v0}(x),\dots,m_{vn}(x)$$ 
are such that all {\bf Type I} solutions in the subset $\mathcal{S}_{\mathbf{a}}$ 
satisfy the inequality 
\begin{equation}\label{type:one:soltn:tuple:eqn:5}
- \log \left( \frac{|m_{vi}(x)|_{v,\KK} }{|x|_{v,\KK} } \right) \geq  e_{vi} h_{\Osh_{\PP^n}(1)}(x) + \mathrm{O}(1)
\end{equation}
for each pair $(i,v)$, where $v \in S$ and $i = 0,\dots,n$.

Now, observe that, by \eqref{type:one:soltn:tuple:eqn:2:prime}, the tuple \eqref{type:one:soltn:tuple:eqn:3} 
satisfies the condition that
$$
\sum_{v \in S} \sum_{i=0}^n e_{vi} 
 =  (n+1+\epsilon)\left( \sum_{v \in S} a_v \right) 
>  n + 1 \text{.}
$$

Thus, to summarize, we have shown that the collection of {\bf Type I} solutions $x$ in \eqref{scattering:solutions:domain} admits a decomposition into finitely many subsets such that if \eqref{type:one:soltn:tuple:eqn:3}
is the tuple corresponding to a given subset $\mathcal{S}_{\mathbf{a}}$, defined by \eqref{type:one:soltn:tuple:eqn:4}, then all solutions to this subset are solutions of the simultaneous system \eqref{type:one:soltn:tuple:eqn:5}.
The {\bf Type I} solutions are thus described by finitely many applications of Theorem \ref{logarithmic:FW:subspace:thm}. 

\subsection*{Simultaneous Type II inequalities.}  Fix a {\bf Type II} solution $x$ in \eqref{scattering:solutions:domain}.  Recall that these are the solutions for which 
\begin{equation}\label{typeII:eqns}
 \sum_{v \in S} \lambda_{s_{vi}}(x,v) < (n+1+\epsilon)h_{L}(x) + \mathrm{O}(1) \text{ for all $i = 0, \dots, n$.}
\end{equation}

Again, we need to do some rewriting of the sections $s_{vi}$ working on the blow-up of the base locus $\operatorname{Bs}(|V|)$ as in Section \ref{linear:systems:rational:maps:v:adic:distances}. Thus, as in our study of the {\bf Type I} solutions, we view these sections $s_{vi}$ as having the property that $\pi^*s_{vi}$, their pullbacks to $X'$ correspond to sections $s_{vi}'$ which are the pullbacks, with respect to $\phi'_{\FF}$, of the linear forms 
$$
\ell_{vi}(x) = a_{vi0}x_0+\hdots+ a_{vin}x_n \in \FF[x_0,\dots,x_n]  \text{ for $v \in S$ and $i = 0,\dots,n$.  }
$$

Now, by adjusting the constant $\mathrm{O}(1)$, without loss of generality, we may assume that 
$$
0 \leq \mathrm{O}(1) < n(n+1+\epsilon)h_L(x) 
$$
for all {\bf Type II} solutions $x$ of the form \eqref{scattering:solutions:domain}.
Moreover, we fix a real number 
$A \geq 0\text{,}$ 
which has the property that
$$
\mathrm{O}(1) = n (n+1+\epsilon)A \text{.}
$$
Further, for each place $v \in S\text{,}$ 
we may choose nonnegative real numbers $d_v$ so that
$$
\sum_{v \in S} d_v = 1 \text{.}
$$

Finally, fix a scaled nonnegative additive decomposition of the constant $\mathrm{O}(1)$;  assume that this decomposition is indexed by the set of places $S$.  In more precise terms
$$
\mathrm{O}(1) = \frac{1}{n+1} \sum_{v \in S} \delta_v 
\text{ 
where
$
\delta_v \geq 0 \text{.}
$
}
$$
Then we may assume that the nonnegative real numbers $d_v$ satisfy the relation that
$$
\delta_v = d_v n (n+\epsilon+1) A \text{ for each $v \in S$.}
$$

Note that we may assume that
$$
0 \leq A \ll h_{L}(x) 
$$
for some sufficiently large constant $A$.   (By assumption, we are considering those {\bf Type II} solutions which have sufficiently large height.)

Now, for each place $v \in S$, put
$$
b_v = \frac{1}{n+1} d_v n(n+1+\epsilon) \text{.}
$$
Then
$$
\sum_{v \in S} \sum_{i=0}^n b_v = n (n+1+\epsilon) \text{.}
$$
Moreover
$$
\frac{1}{n+1}\delta_v \geq b_v h_{L}(x)
\text{ 
for all $v \in S$.
}
$$

Combined, the above discussion, together with \eqref{scattering:system}, implies that for all type {\bf Type II} solutions with sufficiently large height
$$
\sum_{v \in S} \sum_{i=0}^n \lambda_{s_{vi}}(x,v) 
\geq 
(n+1)(n+1+\epsilon)h_{L}(x) + \sum_{v \in S} \delta_v - \sum_{v \in S} \sum_{i=0}^n b_v h_{L}(x) \text{.}
$$

Now, consider the implications of Lemma \ref{Evertse:scattering:lemma4} applied to the sufficiently small positive real number
$$ c = 1 - \frac{\epsilon}{4(n+1)^2} $$
and the finite set $S \times \{0,\dots,n\}$. 
The conclusion is that the collection of {\bf Type II} solutions admits a partition into finitely many subsets so that the following is true:

For each such subset $\mathcal{S}_{\mathbf{b}}$, there exists a tuple of nonnegative numbers
\begin{equation}\label{type:2:solt:eqn1}
\mathbf{b} = (b_{vi})_{\substack{v \in S \\
i = 0, \dots ,n }
}
\end{equation}
which has the property that
$$
\sum_{v \in S} \sum_{i=0}^n b_{vi} = 1 - \frac{\epsilon}{4(n+1)^2}
$$
and which is such that if $x$, as in \eqref{scattering:solutions:domain}, is a {\bf Type II} solution that lies in this subset $\mathcal{S}_{\mathbf{b}}$ 
then
$$
\lambda_{s_{vi}}(x,v)  \geq   b_{vi}(n+1)(n+1+\epsilon)h_{L}(x) + \frac{1}{n+1}\delta_v - b_v h_{L}(x) 
$$
for all $v \in S$ and all $i = 0,\dots,n$.

Now, given such a tuple \eqref{type:2:solt:eqn1}, 
define the tuple 
\begin{equation}\label{type:2:solt:eqn2}
(e_{vi})_{\substack{v \in S \\ i = 0,\dots, n} }
\end{equation}
by the condition that
$$
e_{vi} =  b_{vi}((n+1)(n+1+\epsilon)) - b_v \text{.}
$$
Then the {\bf Type II} solutions that lie in the subset $\mathcal{S}_{\mathbf{b}}$ satisfy the inequality that
$$
\lambda_{s_{vi}}(x,v) \geq  e_{vi} h_{L}(x) + \frac{1}{n+1} \delta_v \text{.}
$$
Moreover, note that, by construction, the tuple \eqref{type:2:solt:eqn2} has the property that
\begin{align*}
\begin{split}
\sum_{v \in S} \sum_{i=0}^n e_{vi} 
&= (n+1)(n+1+\epsilon)\left(1 - \frac{\epsilon}{4(n+1)^2} \right) - n(n+1+\epsilon) 
\\
& > n + 1 \text{.}
\end{split}
\end{align*}

Thus, after adjusting the given constant $\mathrm{O}(1)$, if necessary, the above discussion implies that the nature of the collection of {\bf Type II} solutions 
are thus described by finitely many applications of Theorem \ref{logarithmic:FW:subspace:thm}.
\end{proof}

Having established the case that  $n = n_v$, for all $v \in S$, in the form of Theorem \ref{logarithmic:linear:scattering:subspace:thm}, the case that $n_v > n$, then follows by adapting the argument of \cite[Proof of Theorem 7.2.9]{Bombieri:Gubler} to the language of linear systems.  In particular, Corollary \ref{general:position:logarithmic:linear:scattering:subspace:thm} is established by successive application of Theorem \ref{logarithmic:linear:scattering:subspace:thm}.

\begin{proof}[Proof of Corollary \ref{general:position:logarithmic:linear:scattering:subspace:thm}]
We reduce from the case that $n_v > n$ to finitely many applications of the case that $n_v = n$.  
This is achieved
by partitioning the solutions to the inequality
$$
\sum_{v \in S} \sum_{i=0}^{n_v} \lambda_{s_i}(x,v) \geq (n+1+\epsilon)h_L(x) + \mathrm{O}(1)
$$
into finitely many classes such that after reindexing the sections $s_{v0},\dots,s_{vn}$, if needed, and setting 
$m_{vi} = s_{vi}\text{,}$ 
for $0\leq i \leq n$;  then there exists a constant $C$ which is such that 
$$
\sum_{v \in S} \sum_{i=0}^{n} \lambda_{m_{vi}}(x,v) \geq \sum_{v \in S} \sum_{i=0}^{n_v} \lambda_{s_{vi}}(x,v) - \log(C) > (n+1+\epsilon)h_L(x) -\log(C) \text{.}
$$
By the Northcott property for big line bundles, adjusting $\epsilon$ and excluding at most a finite number of solutions, if required, the desired conclusion of Theorem \ref{logarithmic:linear:scattering:subspace:thm}, for each fixed partition described above, follows from the case that  $n=n_v$.
\end{proof}

\section{ 
Proof of Theorem \ref{Arithmetic:General:Thm:linear:scattering}}\label{Proof:Arithmetic:General:Thm:linear:scattering}

\begin{proof}[Proof of Theorem \ref{Arithmetic:General:Thm:linear:scattering}]
The proof of Theorem \ref{Arithmetic:General:Thm:linear:scattering} is based on the Ru-Vojta filtration construction.  We briefly recall the most important points here and refer to the presentation given in \cite{Grieve:points:bounded:degree} for further details.  

Let 
$d := \dim X\text{,}$ set
$$
\Sigma := \left\{ \sigma \in \{1,\dots,q\} : \bigcap_{j \in \sigma} \operatorname{Supp}(D_j) \not = \emptyset \right\} 
$$
and fix $m \gg 0$, such that 
$\operatorname{Bs}(L) = \operatorname{Bs}(|L^{\otimes m}_{\overline{\KK}}|)\text{.}$

For each $i  = 1,\dots, q$, and each $v \in S$, let  $\lambda_{\mathcal{D}_i}(\cdot,v)$ be a local Weil function for $D_i$ with respect to $v$ and a fixed choice of presentation.

Observe now that, as noted in \cite[p. 985]{Ru:Vojta:2016}, each of the quantities $m_S(x,D_i)/h_L(x)$, for $i = 1,\dots,q$, is bounded for all $x \in X(\KK)$ outside of some proper Zariski closed subset.  In more precise terms, similar to the reduction step from \cite[p. 8]{Grieve:points:bounded:degree}, since $L$ is assumed to be big, it follows from the Northcott property, for big line bundles, that there exist constants $A$ and $B$ which are such that for all but perhaps finitely many points
$$
x \in X(\KK) \setminus \left(\operatorname{Bs}(L) \bigcup \bigcup_{i=1}^q \operatorname{Supp}(D_i) \right)  
$$
if $h_L(x) > B$,
then
$$
\sum_{v \in S} \lambda_{\mathcal{D}_i}(x,v) < A h_L(x) \text{ for all $i = 1,\dots,q$.}
$$

Thus, to prove Theorem \ref{Arithmetic:General:Thm:linear:scattering}, following the approach of \cite[p. 8]{Grieve:points:bounded:degree} and adjusting $\epsilon>0$ if necessary, we fix suitable sufficiently small rational numbers $\beta_1,\dots,\beta_q \in \QQ$, which are such that 
if 
$$
\gamma(L,D_i) := \limsup_{m \to \infty} \frac{m h^0(X,L^{\otimes m})}{\sum_{\ell \geq 1} h^0(X_{\FF},L^{\otimes m}_{\FF} \otimes \Osh_{X_{\FF}}(-\ell D_i))} \text{,}
$$
then 
$
\beta_i < \gamma(L,D_i)^{-1} \text{.}
$

Finally, we also fix a positive integer $b > 0$ and a sufficiently small positive number $\epsilon_1>0$ which are such that the inequality
\begin{equation}\label{Arithmetic:General:Thm:eqn1}
\left( 1 + \frac{d}{b} \right) \max_{1 \leq i \leq q} \frac{ \beta_i m h^0(X,L^{\otimes m}) + m \epsilon_1}{\sum_{\ell \geq 1} h^0(X_{\FF},L^{\otimes m}_{\FF} \otimes \Osh_{X_{\FF}}(- \ell D_i)) } < 1 + \epsilon 
\end{equation}
holds true.

Now, for each $\sigma \in \Sigma$, let
$$
\Delta_\sigma := \left\{ \mathbf{a} = (a_i) \in \prod_{i \in \sigma} \beta_i^{-1} \mathbb{N} : \sum_{i \in \sigma} \beta_i a_i = b \right\}
$$
and for each $\mathbf{a} \in \Delta_{\sigma}$ define for all $t \in \RR_{\geq 0}$
$$
\Ish(t) := \sum_{  \substack{ \mathbf{b} \in \NN^{\# \sigma} \\ \sum_{i \in \sigma} a_ib_i \geq t } } \Osh_{X_{\FF}}\left(- \sum_{i \in \sigma} b_i D_i \right) \text{;}
$$
set
$$
\mathcal{F}(\sigma;\mathbf{a})_t = \H^0(X_{\FF},L^{\otimes m}_{\FF} \otimes \Ish(t)) \subseteq \H^0(X_{\FF},L^{\otimes m}_{\FF})\text{.}
$$
In what follows, we let $\mathcal{B}_{\sigma;\mathbf{a}}$ be a basis of $\H^0(X_{\FF},L^{\otimes m}_{\FF})$ which is adapted to the filtration
$\{\mathcal{F}(\sigma; \mathbf{a})_t\}_{t \in \RR_{\geq 0}}$.

Now, the key technical point, which is exposed in \cite[pp.  9--12]{Grieve:points:bounded:degree}, is that, since the divisors $D_1,\dots,D_q$ intersect properly, over $\FF$, compare with \cite[Definition 2.1 (b)]{Ru:Vojta:2016}, the above filtration construction produces a collection of sections
$$
\{s_1,\dots,s_{k_2}\} \subseteq \H^0(X_{\FF},L^{\otimes m}_{\FF})
$$
which have the property that if $v \in S$, then
\begin{multline}\label{Arithmetic:General:Thm:eqn2}
\frac{b}{b+d} \left( \min_{1 \leq i \leq q} \sum_{\ell \geq  0} \frac{h^0(X_{\FF},L^{\otimes m}_{\FF} \otimes \Osh_{X_{\FF}}(-\ell D_i))}{\beta_i}\right) \sum_{i =1}^q \beta_i \lambda_{\mathcal{D}_j}(\cdot,v) \\
\leq \max_{1 \leq i \leq  k_1} \sum_{j  \in J_i} \lambda_{s_j}(\cdot,v) + \mathrm{O}_v(1) \text{.}
\end{multline}
Here
$J_i\subseteq \{1,\dots,k_1\}$ 
are chosen such that 
$\mathcal{B}_i = \{s_j : j \in J_i\}$
where 
$$
\bigcup_{\sigma;\mathbf{a}} \mathcal{B}_{\sigma;\mathbf{a}} = \mathcal{B}_1 \bigcup \dots \bigcup \mathcal{B}_{k_1} = \{s_1,\dots,s_{k_2}\} \text{.}
$$

On the other hand, it follows from the Subspace Theorem, with linear scattering (Theorem \ref{logarithmic:linear:scattering:subspace:thm}) in the form of Corollary \ref{general:position:logarithmic:linear:scattering:subspace:thm}, that
\begin{equation}\label{Arithmetic:General:Thm:eqn3}
\sum_{v \in S} \max_J \sum_{j \in J}  \lambda_{s_j}(x,v) \leq \left(h^0(X,L^{\otimes m}) + \epsilon_1 \right)h_{L^{\otimes m}}(x) + \mathrm{O}(1) 
\end{equation}
for all $x \in X(\KK)$ outside of a Zariski closed subset $Z$.  This subset $Z$ may be taken to be in the form desired by the conclusion of Theorem \ref{Arithmetic:General:Thm:linear:scattering}.  (In \eqref{Arithmetic:General:Thm:eqn3}, the maximum is taken over all $J \subseteq \{1,\dots,k_1\}$ for which the sections $s_j$, $j \in J$, are linearly independent.)

Combining the above three inequalities, \eqref{Arithmetic:General:Thm:eqn1}, \eqref{Arithmetic:General:Thm:eqn2} and \eqref{Arithmetic:General:Thm:eqn3}, and using the fact that 
$
h_{L^{\otimes m}}(x)  = m h_L(x) 
$
it then follows that
\begin{multline}\label{Arithmetic:General:Thm:eqn4}
\sum_{i=1}^q \beta_i m_S(x,D_i) \leq \\
\left(1 + \frac{d}{b} \right) \max_{1 \leq i \leq q} \left( \frac{\beta_i h^0(X,L^{\otimes m}) + \epsilon_1}{ \sum_{\ell \geq 1} h^0(X_{\FF},L^{\otimes m}_{\FF} \otimes \Osh_{X_{\FF}}(-\ell D_i)) } \right) h_{L^{\otimes m}}(x) + \mathrm{O}(1) \text{.}
\end{multline}
This final inequality \eqref{Arithmetic:General:Thm:eqn4}, may be written in the form
$$
\sum_{i=1}^q \beta_i m_S(x,D) \leq (1 + \epsilon) h_L(x) + \mathrm{O}(1) \text{.}
$$

We have proved existence of constants $\beta(L,D_i)$, for $i = 1,\dots,q$, which are such that the conclusion of Theorem  \ref{Arithmetic:General:Thm:linear:scattering} holds true.  To complete the proof, we replace the $\beta(L,D_i)$ by smaller real numbers as required.
\end{proof}

\section{Asymptotic nature of linear sections}\label{Iitaka:fibration:linear:sections}

We now discuss \emph{asymptotic aspects} of the concept of linear section with respect to a linear series.  To this end, let $X$ be a geometrically irreducible and geometrically normal projective variety over a base number field $\KK$.  Let $L$ be a line bundle on $X$ which has the property that $\kappa(X,L) > 0$.

For simplicity we assume that $L$ has exponent $e = e(L)$ equal to $1$.  For sufficiently large integers $m \in \mathrm{N}(X,L)$, 
with the property that
$$
n_m := \dim |L^{\otimes m}| > 1 \text{,}
$$
let
$
X_m \subseteq \PP^{n_m}
$
be the closure of the image of the rational map
$$
\phi_m = \phi_{|L^{\otimes m}|} \colon X \dashrightarrow \PP^{n_m} \text{.}
$$
Moreover, denote by
$$
\pi_m \colon X'_m \rightarrow X
$$
a resolution of indeterminacies of $\phi_{|L^{\otimes m}|}$.

Fix a suitable sufficiently large integer  
$
\ell_0 \in \mathrm{N}(X,L)  
$
which has the property that
$$
\dim X_{\ell_0} = \kappa(X,L)\text{.}
$$
Fix two relatively prime positive integers $p$ and $q$ with the property that
$$
\dim X_p = \dim X_q = \kappa(X,L) \text{.}
$$
Observe that all sufficiently large positive integers $m \gg 0$ can be written in the form
$$
m = b p^{\ell_0} + c q^{\ell_0} \text{.}
$$
Here, $b,c \geq 1$ are suitable positive integers.

It then follows, via the theory of Iitaka fibrations, \cite{Laz}, that for all such sufficiently large integers $m \gg 0$, there exists a commutative diagram

\begin{center}
\begin{tikzcd}
& X'_{m} \times_{\operatorname{Spec \KK}} \operatorname{Spec} \FF \arrow[dr, "\phi '_m "] \arrow[d,"\pi_{m}"] &  \\
X_{\infty} \arrow[ur, "u_m"] \arrow[d, "\phi_{\infty}"'] \arrow[r, "u_{\infty}"]  & X_{\FF}  \arrow[r, dashrightarrow, "\phi_m"]  &  X_{m} \times_{\operatorname{Spec \KK}} \operatorname{Spec} \FF  \arrow[dll, dashrightarrow,  "\mu_m"] \subseteq \PP^{n_m}_{\FF} \text{.} \\
Y_{\infty}
\end{tikzcd}
\end{center}

Here, $\phi_{\infty}$ is an Iitaka fibration for $L$, and defined over some finite extension $\FF$ of the base number field $\KK$, the morphism $\mu_m$ is generically finite whereas the morphisms $u_m$ and $u_{\infty}$ are birational.

Thus, for all sufficiently large positive integers
$$
m = b p^{\ell_0} + c q^{\ell_0} \gg 0 \text{,}
$$
the proper linear sections of $X$, with respect to $L$, \emph{stabilize} in the sense that each linear section of $X$ with respect to $|L^{\otimes m}|$ can be described as
$$
\pi_m(\phi_m'^{-1}(T)) = u_{\infty}(u_m ^{-1}( \phi^{' -1}_m (T))) \text{,}
$$
for some proper linear subspace 
$T \subsetneq \PP^{n_m}_{\KK}$.

\providecommand{\bysame}{\leavevmode\hbox to3em{\hrulefill}\thinspace}
\providecommand{\MR}{\relax\ifhmode\unskip\space\fi MR }
\providecommand{\MRhref}[2]{%
  \href{http://www.ams.org/mathscinet-getitem?mr=#1}{#2}
}
\providecommand{\href}[2]{#2}

\end{document}